\documentclass[10pt,a4paper]{article}
\usepackage{amsmath,amsthm,amssymb,amsfonts,wasysym}
\usepackage{hyperref}
\usepackage{mathtext}
\usepackage{enumitem}

\pagestyle{myheadings}

\setlength{\topmargin}{-.2in}
\setlength{\textheight}{9.1in}
\setlength{\textwidth}{6.3in}
\setlength{\oddsidemargin}{0in}

\setlength{\footskip}{1.1\footskip}

\setlength{\skip\footins}{8mm}

\addtolength{\footnotesep}{1mm} 


\newcommand{\R}{{\mathbb{R}}}          


       %
       %
     %
\newcommand{\fraki}{{\mathfrak{i}}}
\newcommand{\XIS}{{\mathfrak{X}}}

\newcommand{\SO}{{\mathrm{SO}}}

\newcommand{\rr}{\rightarrow}
\newcommand{\lrr}{\longrightarrow}

\newcommand{\call}{{\cal L}}             %
\newcommand{\calri}{{{\cal R}^\xi}}             %

\newcommand{\na}{{\nabla}}
\newcommand{\tr}[1]{{\mathrm{tr}}\,{#1}}
\newcommand{\trg}[2]{{\mathrm{tr}_{#1}}\,{#2}}

\newcommand{\End}[1]{{\mathrm{End}}\,{#1}}
\newcommand{\grad}{{\mathrm{grad}}\,}
\newcommand{\dx}{{\mathrm{d}}}
\newcommand{\divg}{{\mathrm{div}}}

\newcommand{\papa}[2]{\frac{\partial#1}{\partial#2}}

\newcommand{\mg}{{\mathrm{g}}}
\newcommand{\Jnagasasa}{J^{\mathrm{NS}}}

\newcommand{\cinf}[1]{{\mathrm{C}}^\infty_{#1}}
\newcommand{\vol}{{\mathrm{vol}}}
\newcommand{\ric}{{\mathrm{Ric}\,}}

\newcommand{\estrela}{{\boldsymbol{\star}}}

\newtheorem{teo}{Theorem}[section]

\newtheorem{coro}{Corollary}[section]
\newtheorem{prop}{Proposition}[section]

\newenvironment{Rema}[1][Remark.]{\begin{trivlist}
\item[\hskip \labelsep {\bfseries #1}]}{\end{trivlist}}

\newenvironment{meuenumerate}
{\begin{enumerate}[label=(\roman*)]
  \setlength{\itemsep}{1pt}
  \setlength{\parskip}{0pt}
  \setlength{\parsep}{0pt}}
{\end{enumerate}}


\def\cyclic{\mathop{\kern0.9ex{{+}
\kern-2.2ex\raise-.28ex\hbox{\Large\hbox{$\circlearrowright$}}}}\limits}

\title{Notes on the Sasaki metric}

\author{R. Albuquerque}

\begin{document}


\maketitle


\

\begin{abstract}

We survey on the geometry of the tangent bundle of a Riemannian manifold, endowed with the classical metric established by S.~Sasaki 60 years ago. Following the results of Sasaki, we try to write and deduce them by different means. Questions of vector fields, mainly those arising from the base, are related as invariants of the classical metric, contact and Hermitian structures. Attention is given to the natural notion of extension or complete lift of a vector field, from the base to the tangent manifold. Few results are original,  but finally new equations of the mirror map are considered.

\end{abstract}


\ 
\vspace*{3mm}\\
{\bf Key Words:} tensor extension; Killing vector field; Sasaki metric; tangent bundle.
\vspace*{2mm}\\
{\bf MSC 2010:} Primary: 37C10, 53C21, 53D25, Secondary: 53A45, 53C15, 58A32

\vspace*{7mm}
\vspace*{7mm}
\vspace*{7mm}

\markright{\sl\hfill  R. Albuquerque \hfill}

\section{Introduction}

These notes try to give an informal up-to-date presentation, together with some generalisations and observations, of the fundamental results which one finds in the celebrated article of S.~Sasaki \cite{Sasa} of 1958. The article of Sasaki\footnote{A short biography of S.~Sasaki: \url{http://www-history.mcs.st-andrews.ac.uk/Biographies/Sasaki.html}} studies the differential geometry of tangent bundles of Riemannian manifolds and has been constantly and consistently the reference of many developments of the theory. It is thus a modest commemoration of its 60th anniversary that we bring here. Also we feel it may be interesting to give to light a renewed perspective of the many theorems in the paper, now with some attention on those results which became less known and may be slightly generalised. If not else, we present both a personal and a more invariant approach to those important findings.

Noteworthy is the notion of extension of a vector field, introduced in \cite{Sasa}. It is indeed most natural to the Riemannian geometry of the tangent space. For example, it induces a Sasaki metric Killing vector field from a Killing vector field on the base. We discover other properties not explored neither on that or other articles.

It is irrelevant for the present study, but we wish to remember the reader our interest has mainly in view the construction of \textit{gwistor} space and of a natural exterior differential system of Riemannian geometry introduced in \cite{Alb2019}. Hence the justification, by the negative, of our study keeping aside the geometry of the tangent sphere bundles with Sasaki metric. This is very close, yet more complicated.

\section{The tangent manifold and the Sasaki metric}

\label{Section1}

\subsection{Tangent bundles with a linear connection}

Some general results on vector fields on a given oriented Riemannian smooth manifold $({\cal M},g)$ start the article of Sasaki.

Let $\XIS_U=\Gamma(U;T{\cal M})$ denote the space of vector fields on $U\subset\cal M$.
 
A vector field $X\in\XIS_{\cal M}$ is called {divergence-free} (or {solenoidal} or {incompressible}) if it satisfies $\delta X^\flat=0$. In other words, if the following function, the divergence of $X$, vanishes:
\begin{equation}
 \divg X:=-*\dx*X^\flat=-*\dx(X\lrcorner\vol)=-*\call_X\vol.
\end{equation}

One further proves $\divg X=\delta X^\flat=-\tr{\na_\cdot X}=-\trg{g}{\na_\cdot X^\flat\,\cdot}$, where $\na$ is the Levi-Civita connection.

If $X,Y$ are incompressible, then the Lie bracket $[X,Y]$ is incompressible. This follows from the today well-known identity $\call_{[X,Y]}=[\call_X,\call_Y]$.

$X$ is called a {Killing} vector field if $\call_Xg=0$; this is easy to see to be equivalent to the vanishing, for all $Z,W \in T\cal M$, of 
\begin{equation} \label{Killingvf}
 (\call_Xg)(Z,W)=g(\na_ZX,W)+g(Z,\na_WX) .
\end{equation}

By symmetries, it is immediate that Killing implies incompressible.

A vector field $X$ is called {harmonic} if $\dx X^\flat=0$ and $\divg X=0$.

From now on we let $M$ denote a $\cinf{}$ manifold.

The tangent bundle of $M$ is again a manifold, with charts $(x^i,v^i)$, $i=1,\ldots,m$, on $U\times\R^m$, where $U\subset M$ is open, $(x,U)$ is a chart of $M$ and $m=\dim M$. The transition maps of the vector bundle are induced from a change of charts and their Jacobians\footnote{In the following way: if $(x'^a,v'^a)$ is another chart in a domain $U'$, such that $U\cap U'\neq\emptyset$, then 
\[ v'^a=\papa{x'^a}{x^i}v^i,\qquad\ \dx x^i=\papa{x^i}{x'^a}\dx x'^a, \qquad\
   \dx v^i=v'^a\frac{\partial^2x^i}{\partial x'^a\partial x'^b}\dx x'^b+\papa{x^i}{x'^a}\dx v'^a \] and
\[ \papa{ }{x'^a}=\papa{x^i}{x'^a}\papa{ }{x^i}+v'^b\papa{^2x^j}{x'^a\partial x'^b}\papa{ }{v^j},\qquad\qquad 
\papa{ }{v'^a}=\papa{x^i}{x'^a}\papa{ }{v^i} . \] }. Almost tautological is the assertion that we have a manifold
\begin{equation}
 TM=\bigl\{u\in T_xM:\ x\in M\bigr\}
\end{equation}
with embedded linear fibres $T_xM$ if and only if we have a vector bundle $\pi:TM\rr M$ over $M$ with the same linear fibres. The obvious bundle projection is denoted by $\pi$.

Now let us suppose $M$ is endowed with a linear connection, that is, a covariant derivative or local operator $\na:\Gamma(U;TM)\lrr\Gamma(U;T^*M\otimes TM)$ on vector fields on $U\subset M$ satisfying Leibniz rule. We have already used the Levi-Civita connection, which is metric, $\na g=0$, and {torsion-free}, i.e. the tensor  $T^\na(X,Y)=\na_XY-\na_YX-[X,Y]=0$. For the moment, let us consider any linear connection on $M$.

In charts, we have the Christoffel `symbols' as the coefficients in $\na_i\partial_j=\Gamma_{ij}^k\partial_k$.

Every $X\in\XIS_M$ \textit{lifts} both to a \textit{horizontal} $\pi^*X$ and a \textit{vertical} vector field $\pi^\estrela X$ over the tangent manifold. Indeed both belonging to $\XIS_{TM}$. Distinguished in the chart $(x^i,v^i)$, we have
\begin{equation}
 \pi^*\partial_i=\partial_i-v^j\Gamma_{ij}^b\partial_{v^b}\qquad \mbox{and}\qquad \pi^\estrela\partial_i=\partial_{v^i}.
\end{equation}

Hence, if $X=X^i\partial_i$, then 
\begin{equation} \label{vectorlifts}
 \pi^*X=X^i\pi^*\partial_i
 =X^i(\partial_i-v^j\Gamma_{ij}^b\pi^\estrela\partial_b)\qquad\ \ \mbox{and}\qquad   \ \ \pi^\estrela X=X^i\pi^\estrela\partial_i
\end{equation}

It is very often that one omits the pull-back notation when speaking of functions on $TM$ arising from $M$.

\subsection{New tensors on the tangent manifold}

The above information may be recovered from the covariant derivative of a canonical vector field $\xi\in\XIS_{TM}$; namely, for any $u\in TM$, 
\begin{equation}
 \xi_u=u \:\in\pi^\estrela TM .
\end{equation}
It is not a lift, yet a vector field independent of $\na$; locally $\xi=v^i\pi^\estrela\partial_{i}$.

A splitting of $TTM=H\oplus V$ then arises from a short exact sequence over the manifold $TM$:
\begin{equation}
 0\lrr V\lrr TTM\stackrel{\dx\pi}{\lrr}\pi^*TM\lrr0
\end{equation}
with the \textit{vertical} and \textit{horizontal} subbundles given by 
\begin{equation}
 V=\pi^\estrela TM,\qquad
 H=\ker\bigl(\pi^\estrela\na_\cdot\xi\bigr)\qquad\mbox{and}\qquad  \pi^\estrela\na_{\pi^\estrela X}\xi=\pi^\estrela X .
\end{equation}
The connection induces vector bundle projections, usually denoted $(\cdot)^h$ and $(\cdot)^v$, allowing us to identify $H$ with $\pi^*TM$ via $\dx\pi_{|H}$. Further, the latter isomorphism yields instantly that a horizontal lift is global and well-defined. Of course, this may be proved directly.

Notice we use $(\cdot)^*$ for horizontal lifts and $(\cdot)^\estrela$ for vertical lifts. 

The connection on $M$ is furthermore pulled-back to both pull-back bundles. In a synthesised notation, we then obtain a linear connection on the tangent manifold, respecting $TTM=H\oplus V$, yet always denoted $\na^*$. It is the connection-sum
\begin{equation}
 \na^*=\pi^*\na\oplus\pi^\estrela\na  .
\end{equation}

The torsion of $\na^*$ is immediately found by applying the two projections, also knowing it is a tensor:
\begin{equation} \label{torsionNablaasterisco}
 T^{\na^*}=\pi^*T^\na\oplus R^{\pi^\estrela\na}(\cdot,\cdot)\xi .
\end{equation}
Since this is a tensor we may use lifts in the proof. Furthermore we have that 
\begin{equation}
 \calri(X,Y):=R^{\pi^\estrela\na}(X,Y)\xi={\pi^\estrela R^\na}(X,Y)\xi
\end{equation}
only depends on the horizontal components of $X,Y\in\XIS_{TM}$. Of course, $R^\na(X,Y)Z$ on $M$ denotes the curvature $(3,1)$-tensor, defined by $\na_X\na_YZ-\na_Y\na_XZ-\na_{[X,Y]}Z$.

A first consequence of the above is the well-known result that $H$ defines an involutive distribution if and only if $R\equiv0$, i.e. $(M,g)$ is flat. 

In our studies of tangent bundle structures, we have found it most useful to define what we call the \textit{mirror} map $B:TTM\lrr TTM$, indeed an endomorphism of the tangent bundle of the tangent manifold:
\begin{equation}
 B\pi^*X=\pi^\estrela X,\qquad \quad B\pi^\estrela X=0 .
\end{equation}

With the mirror map and its formal adjoint we define the \textit{Nagano-Sasaki} almost-complex structure\footnote{This structure is commonly attributed to S.~Sasaki, however, in probably more acquainted references, it is due to T.~Nagano \cite{Nagano}, cf. \cite{OkuTacha} and others.} on $TM$:
\begin{equation}
 \Jnagasasa=B-B^\mathrm{t} .
\end{equation}
In particular, $TM$ is always an orientable space. An easy computation yields $\Jnagasasa$ integrable if and only if $T^\na=0$ and $R^\na=0$, a result first proved in \cite{Nagano} (cf. \cite{OkuTacha}).

Recently, so-called golden structures have been defined and a particularly natural example on tangent bundles is discovered in \cite{CenGezSal}. It is easily seen the following map $\varphi\in\End{TTM}$ satisfies $\varphi^2-\varphi-I=0$:
\begin{equation}
 \varphi=\frac{1}{2}I+\frac{\sqrt{5}}{2}(B+B^\mathrm{t}).
\end{equation}

The endomorphisms $B$, $\Jnagasasa$ and $\varphi$ are all parallel for $\na^*$, the main identity being
\begin{equation}
 \na^*B=0.
\end{equation}

We may further define a canonical vector field on the tangent manifold. Globally, by
\begin{equation}
 S=B^\mathrm{t}\xi
\end{equation}
and locally by $S=v^a\pi^*\partial_a$. This is called the \textit{geodesic flow} or \textit{geodesic spray} and clearly satisfies
\begin{equation}
 \na^*_{\pi^*X}S=0,\qquad\quad\na^*_{\pi^\estrela X}S=\pi^*X.
\end{equation}
In other words, $B^\mathrm{t}=\na^*_\cdot S$.

Given $X\in\XIS_M$, one may define the \textit{extended} vector field or the \textit{extension} or yet the \textit{comp\-lete lift}\footnote{Apparently it was Sasaki who first saw the relevance of such older concept within tangent manifold geometry. In this article, we conform to the term `extended', rather than the more commonly adopted `complete lift'.} of $X$ on the manifold $TM$:
\begin{equation} \label{extensionvfi}
 \widetilde{X}=X^i\partial_i+v^j\papa{X^i}{x^j}\partial_{v^i}.
\end{equation}
With any $\na$ torsion-free linear connection ($\Gamma_{ij}^k=\Gamma_{ji}^k$) we may write globally, inspired by \cite{Tanno}:
\begin{equation} \label{extensionvfie}
 \widetilde{X}=\pi^*X+\na^*_{S}\pi^\estrela X.
\end{equation}
Indeed, $\widetilde{X}=X^i\partial_i-X^jv^a\Gamma_{ja}^b\pi^\estrela\partial_b+v^a(\partial_aX^j)\pi^\estrela\partial_j+v^aX^j\Gamma_{aj}^b\pi^\estrela\partial_b$.

The geodesic flow $S=B^\mathrm{t}\xi$ is the vector field over $TM$ whose integral parameterised curves $\tau$ are the velocities $\dot{\gamma}$ of geodesics $\gamma$ of $(M,\na)$. Since $S_\tau=(B^\mathrm{t}\xi)_\tau=B^\mathrm{t}\tau\in H$, we can see the former condition is $B^\mathrm{t}\tau=\dot{\tau}$ if and only if $\tau=\dot{\gamma}$ for some curve $\gamma$ in $M$ such that $\na^*_{\dot{\tau}}\xi=\gamma^*\na\dot{\gamma}=0$. In other words, a curve $\gamma$ in $M$ is a geodesic if and only if $(\dx\pi_{|H})^{-1}(\dot{\gamma})=\ddot{\gamma}$.

Another concept is that of \textit{fibre-preserving} vector field $Z$ over $TM$, i.e a vector field such that the induced transformations preserve the fibres $T_xM$, for all $x$. It must verify $\call_ZY\in\Gamma(V)$, for all $Y\in\Gamma(V)$. With $T^\na=0$, it is equivalent to $Z^h$ depending only of $x$. This is immediate from \eqref{torsionNablaasterisco}.

\subsection{Sasaki metric}

If $(M,g)$ is a Riemannian manifold, then we have an associated canonical torsion-free metric connection $\na$ on $M$. From now on $\na$ is the Levi-Civita connection and $R$ denotes its curvature tensor.

With such connection one defines the Sasaki metric on the manifold $TM$:
\begin{equation}
 \mg:=\pi^*g\oplus\pi^\estrela g.
\end{equation}
It follows immediately that $\na^*\mg=0$.

The golden structure $\varphi$ is compatible with the metric, i.e.  $\varphi\lrcorner\mg$ is again a symmetric tensor.

Since $\Jnagasasa$ is compatible with the metric and $\na^*\Jnagasasa=0$, the associated Hermitian structure is $\na^*$-parallel. Writing $\omega=-\Jnagasasa\lrcorner\mg$, we find
\begin{equation}
\begin{split}
 \dx\omega(X,Y,Z) &=  -\omega(\calri(X,Y),Z)
             -\omega(\calri(Y,Z),X)-\omega(\calri(Z,X),Y) \\
     & = \mg(\calri(X,Y),BZ)+\mg(\calri(Y,Z),BX)+\mg(\calri(Z,X),BY) \:=\:0.
\end{split}
\end{equation}
Hence $\dx\omega=0$ follows from Bianchi identity.

The symplectic 2-form $\omega$ on $TM$ is actually the pull-back of the canonical symplectic 2-form $\dx\lambda$ on the cotangent manifold, via the diffeomorphism $\ell=\cdot^\flat:TM\lrr T^*M$, $\ell(u)=g(u,\cdot)$. Such intrinsic structure arises from the Liouville 1-form $\lambda$, giving $\ell^*\lambda=S^\flat$ (the Sasaki dual). Indeed $\dx\ell$ is linear on the vertical directions and preserves the horizontal subspaces for the dual connections on $TM$ and $T^*M$. Hence
\begin{equation}
\begin{split}
(\ell^*\lambda)_u=\lambda_{\ell(u)}\circ\dx\ell=g(u,\dx\pi(\dx\ell(\cdot)))= \qquad\qquad \quad\quad &  \\
=g(u,\dx\pi(\cdot))= \mg(S_u,\cdot)=\theta_u .&
\end{split}
\end{equation}

On the other hand, writing $\theta=S^\flat$ and recurring to the torsion-free connection
\begin{equation}
D^*=\na^*-\frac{1}{2}\calri ,
\end{equation}
we easily prove that, for all $X,Y\in TTM$,
\begin{equation}
 \dx\theta(X,Y)=\mg(BY,X)-\mg(BX,Y) .
 \end{equation}
Thus $\omega=\dx\theta=\ell^*\dx\lambda$.

Still making use of $D^*$, we find
\begin{equation}
 \dx\xi^\flat=0 .
\end{equation}
Since (cf. \cite{Alb2019})
\begin{equation}
 *\theta=\pm\xi^\flat\wedge(\dx\theta)^{m-1},
\end{equation}
we deduce the result that $S$ is incompressible: $\delta\theta=0$.

The Levi-Civita connection $\na^\mg$ of $\mg$ is said to be found in \cite{Sasa}. Nevertheless, we have a most simple and useful expression for $\na^\mg$: for all $X,Y\in\XIS_{TM}$,
\begin{equation}
 \na^\mg_XY=\na^*_XY+A(X,Y)-\frac{1}{2}\calri(X,Y)
\end{equation}
where $A(X,Y)$ is the symmetric term which makes the connection metric. It is defined by
\begin{equation}
 \mg(A(X,Y),Z)=\frac{1}{2}(\mg(\calri(X,Z),Y)+\mg(\calri(Y,Z),X)).
\end{equation}
For instance, $\na^\mg_{\pi^\estrela X}\pi^\estrela Y=0$, for all $X,Y\in\XIS_M$.

It is very important to observe that $A$ takes only horizontal values, because $\calri$ vanishes on any vertical direction, and $\calri$ takes only vertical values.

Henceforth the fibres $T_xM$ are totally geodesic, for all $x\in M$.

The Riemannian curvature tensor of $\mg$ is first computed by O.~Kowalski in \cite{Kow1}. Also Kowalski finds that $\mg$ is locally symmetric if and only if $R=0$; which, in turn, implies that $R^\mg=0$.

The local holonomy algebra of the Sasaki metric is $\R$-linearly generated by three types of tensors, now following \cite[p.146]{Alb2014d}. These are curvature skew-adjoint operators $R^\mg_o(X,Y)$ at the zero-section $o$, given, with respect to $TTM=H\oplus V$, by
\begin{equation}
\begin{split}
 & \hspace{15mm} \left[\begin{array}{cc} 
\pi^*R(X^h,Y^h) & 0 \\ 0 & \pi^\estrela R(X^h,Y^h)
      \end{array}\right] , \\
 &\left[\begin{array}{cc} 
  0 &-\frac{1}{2}\mg(\pi^\estrela R(X^h,\ )Y^v,\ ) \\ \frac{1}{2}\mg(\pi^\estrela R(X^h,\ )Y^v,\ )^\dagger & 0
     \end{array}\right] , \\
  &  \hspace{20mm} \left[\begin{array}{cc} 
\mg(\pi^\estrela R(\ ,\ )X^v,Y^v) & 0 \\ 0 & 0
      \end{array}\right].
\end{split}
\end{equation}

The last equation yields immediately $\Jnagasasa$ integrable if and only if $R=0$.

The holonomy algebra is the Lie algebra of the holonomy group and, by the Theorem of Ambrose-Singer, it is generated by all curvature tensors at \textit{all} points of the manifold. On the other hand all holonomy groups are conjugate to each other, for $TM$ connected i.e. $M$ connected. So if we know the holonomy at the zero-section, we know a subgroup of the global holonomy.

Let us see an example. Suppose $M$ connected has constant sectional curvature $c\neq0$. Then the holonomy group of $(TM,\mg)$ is $\SO(2m)$. Indeed, the maps $\mg(\pi^\estrela R(X^h,\ )Y^v,\ )=c\mg(X^h,\ )\mg(Y^v,\ )$ generate an $m^2$ dimensional subspace, therefore we deduce the total dimension of the three linearly independent subspaces is $m(m-1)/2+m^2+m(m-1)/2=m(2m-1)$.

The Riemannian manifold $(TM,\mg)$ is Einstein if and only if $R=0$, i.e. $M$ is flat. Indeed, we have for instance from \cite[Proposition 1.3]{Alb2011} that
\begin{equation}
\mathrm{Scal}^\mg=\mathrm{Scal}^g-\frac{1}{4}\|\calri\|^2
\end{equation}
We recall, $\|\calri\|^2=v^{p}v^{p'}R_{kpij}R_{k'p'i'j'}g^{ii'}g^{jj'}g^{kk'}$ where $R_{kpij}=g(R(\partial_{i},\partial_{j})\partial_{p},\partial_{k})$.

Many variations of the Sasaki metric have been defined and developed, specially those with weights. We refer the reader to \cite{AbbCalvaPerrone,Alb2011,Alb2014a,Calva,KMS,KowSek0} and the references there-in, concerning the so-called $g$-natural metrics.

Recently, the author discovered \textit{ciconia metric} on any $TM^2$ which is truly natural to any oriented Riemannian surface $M^2$, cf. \cite{Alb2018}. Ciconia metric is not present in the classification of $g$-natural metrics.

\section{Further on Sasaki's results}

\subsection{Theorems of Sasaki on vector fields}

Let us now see the theorems of Sasaki on vector fields on $M$ and $TM$. We notice that one may complement the results by further considering some horizontal lifts or their duals.

Every vertical lift $\pi^\estrela X$ is an incompressible field. One uses the torsion-free connection $D^*=\na^*-\frac{1}{2}\calri$ and well-known formulas for exterior derivative to prove
\begin{equation}
 \call_{\pi^\estrela X}(\pi^*\vol\wedge\pi^\estrela\vol)=
 \pi^*\vol\wedge\dx({\pi^\estrela X}\lrcorner\pi^\estrela\vol)=0.
\end{equation}
This is also clear, using symmetries of $\calri$ and $g$:
\begin{equation} \label{divergenceofverticalliftofvectorfields}
 \begin{split}
 \tr{\na^\mg_\cdot\pi^\estrela X} 
  &=\mg(\na^\mg_{\pi^*\partial_i}\pi^\estrela X,\pi^*\partial_j)g^{ij}+\mg(\na^\mg_{\pi^\estrela\partial_i}\pi^\estrela X,\pi^\estrela\partial_j)g^{ij}    \\
  &=\mg(\na^*_{\pi^*\partial_i}\pi^\estrela X,\pi^*\partial_j)g^{ij}+
  \frac{1}{2}\mg(\calri_{\pi^*\partial_i,\pi^*\partial_j},\pi^\estrela X)g^{ij}\:=\: 0.
 \end{split}
\end{equation}

On the other hand, we deduce
\begin{equation} \label{derivativeofverticalliftdual}
 \dx(\pi^\estrela X)^\flat(Y,Z)=\mg(\na^*_Y\pi^\estrela X,Z)-\mg(\na^*_Z\pi^\estrela X,Y)+\mg(\pi^\estrela X,\calri(Y,Z)) .
\end{equation}
Therefore, by analysis of three cases for $Y,Z\in H\oplus V$, we conclude that a vertical lift $\pi^\estrela X$ is harmonic if and only if $\na X=0$.

Given $X\in\XIS_M$, the extended vector field $\widetilde{X}$ is incompressible if and only if the horizontal lift $\pi^*X$ is so. Indeed we have
\begin{equation}
  \tr{\na^\mg_\cdot(\na^*_S\pi^\estrela X}) 
  =\tr{\na^\mg_\cdot\pi^*X}=\tr{\na_\cdot X} .
\end{equation}
Let us see the first identity:
\begin{equation}\label{remarkablecomputation}
 \begin{split}
 & \ \ \ \mg(\na^\mg_{\pi^*\partial_i}\na^*_S\pi^\estrela X,\pi^*\partial_j)g^{ij}+\mg(\na^\mg_{\pi^\estrela\partial_i}\na^*_S\pi^\estrela X,\pi^\estrela\partial_j)g^{ij}  \ = \\
 & =\mg(\na^*_{\pi^*\partial_i}\na^*_S\pi^\estrela X,\pi^*\partial_j)g^{ij} +\frac{1}{2}\mg(\calri(\pi^*\partial_i,\pi^*\partial_j),\na^*_S\pi^\estrela X)g^{ij} + \\  &\ \ \qquad\qquad +\mg(\na^*_{\pi^\estrela\partial_i}(v^k\na^*_{\pi^*\partial_k}\pi^\estrela X),\pi^\estrela\partial_j)g^{ij}\\
 & =(\mg(\na^*_{\pi^*\partial_i}\pi^\estrela X,\pi^\estrela\partial_j)+ \mg(v^k\na^*_{\pi^\estrela\partial_i}\pi^\estrela(\na_{\partial_k}X),\pi^\estrela\partial_j))g^{ij}\\
 & =g(\na_{\partial_i}X,\partial_j)g^{ij}
 \end{split}
\end{equation}
Hence, cf. \cite[Theorem 6]{Sasa},
\begin{equation}
 \divg\,\widetilde{X}=\divg(\pi^*X+\na^*_{S}\pi^\estrela X)=2\divg\,X .
\end{equation}
In particular, $\widetilde{X}$ is incompressible if and only if $X$ is incompressible.
\begin{Rema} \label{remarkveryuseful}
 The above computation is quite useful. For any vector fields $X,Y\in\XIS_M$, we have 
 \begin{equation}
  \na^*_{\pi^\estrela Y}\na^*_S\pi^*X=\na^*_{\pi^*Y}\pi^*X=
  \pi^*(\na_{Y}X) \qquad\mbox{and}\qquad
  \na^*_{\pi^\estrela Y}\na^*_S\pi^\estrela X=\pi^\estrela(\na_YX)  .
 \end{equation}
Moreover, we find $\na^*_{\pi^*\partial_i}\na^*_S\pi^\estrela X=v^b\pi^\estrela(\na_i\na_bX-\na_{\na_i\partial_b}X)=v^b\pi^\estrela\na^2X(\partial_i,\partial_b)$. And the same is valid with $\pi^*X$ in analogous form. E.g. we may write $\na^*_{Y^h}\na^*_S\pi^*X=\pi^*\na^2X(Y^h,S)$.
\end{Rema}

Just as in \eqref{derivativeofverticalliftdual}, we find as an equivalent condition for $(\widetilde{X})^\flat$ to be closed, that $X^\flat$ is closed, $\na^2X=0$ and $\na_WX\perp R^\na(Y,Z)W$, for all $Y,Z,W\in TM$. This is quite a strong set of equations, which is deduced from the three cases analysis, with $Z,Y\in H\oplus V$, of
\begin{equation}
 \dx(\widetilde{X})^\flat(Y,Z)=\mg(\na^*_Y\widetilde{X},Z)-\mg(\na^*_Z\widetilde{X},Y)+\mg(\na^*_S\pi^\estrela X,\calri(Y,Z)) .
\end{equation}

Regarding horizontal lifts, we have just seen that $\pi^*X$ is incompressible if and only if $X$ is incompressible. Now we find
\begin{equation}
 \dx(\pi^*X)^\flat(Y,Z)=\mg(\na^*_Y\pi^*X,Z)-\mg(\na^*_Z\pi^*X,Y) =\pi^*(\dx X^\flat)(Y,Z) .
\end{equation}
and thus $(\pi^*X)^\flat$ is closed if and only if $X^\flat$ is closed. In other words, if and only if $X$ is locally a gradient, because $(\pi^*X)^\flat=\pi^*X^\flat$.

We conclude that $\pi^*X$ is harmonic if and only if $X$ is harmonic.

\subsection{Theorems of Sasaki on 1-forms}

It is clear that if the pull-back of 1-forms $\pi^*\mu=\dx\Phi$ is a gradient co-vector of a function on $TM$, then such function $\Phi$ is constant along the fibres and then $\mu$ is a gradient co-vector. Reciprocally, if $\mu=\dx f$, then $\pi^*\mu=\dx(f\circ\pi)$.

Also  computations yield $\delta\pi^*\alpha=\pi^*\delta\alpha$ for any 1-form on $M$. Moreover, $\pi^*\alpha$ is harmonic if and only if $\alpha$ is harmonic.

Just as with vertical lift of contra-variant vector fields, the covariant field
\begin{equation}
\pi^\estrela\alpha=\pi^*\alpha\circ B^\mathrm{t}
 \end{equation}
is always incompressible.

To prove the last two assertions we recall adapted frames, which here are defined as $\{e_1,\ldots,e_{2m}\}$ with $e_i$, $1\leq i\leq m$, denoting the horizontal lift of an orthonormal frame of $M$ and with $e_{i+m}=Be_i$. Then $\pi^\estrela\alpha(e_i)=0$ and $\pi^\estrela\alpha(e_{i+m})=\pi^*\alpha(e_i)$ is a fibre constant; hence
\begin{equation} \label{coderivativeofhorizontalallift}
 \begin{split}
  -\delta\pi^*\alpha &= (\na^\mg_{e_i}\pi^*\alpha)(e_i)+
  (\na^\mg_{e_{i+m}}\pi^*\alpha)(e_{i+m}) \\
  &= e_i(\pi^*\alpha(e_i))-\pi^*\alpha(\na^\mg_{e_i}e_i)+e_{i+m}(\pi^*\alpha(e_{i+m}))-\pi^*\alpha(\na^\mg_{e_{i+m}}e_{i+m}) \\
  &= \pi^*(e_i(\alpha(e_i))-\pi^*\alpha(\na^*_{e_i}e_i+A(e_i,e_i)-\frac{1}{2}\calri(e_i,e_i))\: =\: -\pi^*\delta\alpha 
 \end{split}
\end{equation}
and
\begin{equation} \label{coderivativeofverticallift}
  -\delta\pi^\estrela\alpha=  e_i(\pi^\estrela\alpha(e_i))-\pi^\estrela\alpha(\na^\mg_{e_i}e_i)+ e_{i+m}(\pi^\estrela\alpha(e_{i+m}))-\pi^\estrela\alpha(\na^\mg_{e_{i+m}}e_{i+m}) = 0 .
\end{equation}

We also find easily, cf. \eqref{derivativeofverticalliftdual},
\begin{equation} \label{derivativeofverticallift}
 \dx(\pi^\estrela\alpha)=(\na^*_\cdot\pi^*\alpha)\wedge B^\mathrm{t}+\pi^*(\alpha\circ R^\na)S.
\end{equation}
Since $(R(X,Y)\alpha)Z=-\alpha(R(X,Y)Z)$, we have that $\pi^\estrela\alpha$ is harmonic if and only if $\na\alpha=0$.

In fact, $\pi^\estrela X^\flat=\pi^*X^\flat\circ B^\mathrm{t}=(\pi^\estrela X)^\flat$, so the conclusions for $\pi^\estrela\alpha$ and $\pi^\estrela X$ are equivalent.

One also defines the extension to $TM$ of any 1-form $\alpha\in\Omega^1_M$: in a chart $(x^i,v^i)$, with $\alpha=f_i\dx x^i$,
\begin{equation}\label{extensionform}
 \widetilde{\alpha}=v^j\papa{f_i}{x^j}\dx x^i+f_i\dx v^i=\na^*_S\pi^*\alpha+\pi^\estrela\alpha .
\end{equation}
Again, the lifts\footnote{One can test formula \eqref{derivativeofverticallift} in charts.}
\begin{equation}
 \dx x^i=\dx(x^i\circ\pi)=\pi^*\dx x^i,\qquad\quad\pi^\estrela\dx x^i=\dx v^i+v^a\Gamma_{ja}^i\dx x^j ,
\end{equation}
dual to $\pi^*\partial_i,\pi^\estrela\partial_i$, cf. \eqref{vectorlifts}, show the extension is independent of the torsion-free connection and choice of chart. $\pi^*\alpha$ is the usual pull-back. We recall $S=v^j\pi^*\partial_j$ locally.

From the first identity of \eqref{extensionform} it follows easily the equivalence $\dx\widetilde{\alpha}=0$ if and only if $\dx\alpha=0$ (this is $\partial_if_j=\partial_jf_i$).
\begin{teo}[Sasaki]
 Suppose $\alpha$ is a harmonic 1-form on $M$. Then $\widetilde{\alpha}$ is harmonic if and only if $\alpha^\sharp\lrcorner\ric{}=0$.
 
 Let $M$ be Ricci-flat and $\alpha\in\Omega^1_M$. Then $\alpha$ is harmonic if and only if $\widetilde{\alpha}$ is harmonic.
\end{teo}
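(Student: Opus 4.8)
The plan is to compute $\dx\widetilde{\alpha}$ and $\delta\widetilde{\alpha}$ separately, using the decomposition $\widetilde{\alpha}=\na^*_S\pi^*\alpha+\pi^\estrela\alpha$ from \eqref{extensionform}, and then to analyse harmonicity by the now-familiar case-by-case method over $TTM=H\oplus V$. The closedness part is already disposed of in the excerpt ($\dx\widetilde{\alpha}=0\iff\dx\alpha=0$), so the whole content is about $\delta\widetilde{\alpha}$. I would first establish a clean formula for $\delta\widetilde{\alpha}$ in an adapted frame $\{e_1,\dots,e_m,e_{m+1}=Be_1,\dots\}$ as in the derivation of \eqref{coderivativeofhorizontalallift}--\eqref{coderivativeofverticallift}, taking the $e_i$ to be horizontal lifts of a \emph{local geodesic} orthonormal frame of $M$ so that $\na_{\partial_i}\partial_j$ vanishes at the point. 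Then $\delta\pi^\estrela\alpha=0$ by \eqref{coderivativeofverticallift}, so $\delta\widetilde{\alpha}=\delta(\na^*_S\pi^*\alpha)$, and the task reduces to computing the trace $-\trg{\mg}{(\na^\mg_\cdot\na^*_S\pi^*\alpha)\cdot}$.

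For that trace I would reuse Remark~\ref{remarkveryuseful}: $\na^*_{\pi^\estrela Y}\na^*_S\pi^*\alpha=\pi^*(\na_Y\alpha)$ handles the vertical-frame contribution, and $\na^*_{\pi^*\partial_i}\na^*_S\pi^*\alpha=v^b\pi^*\na^2\alpha(\partial_i,\partial_b)$ handles the horizontal one (the dual version of the vector-field identity, with a sign since we act on covectors). Passing from $\na^*$ to $\na^\mg=\na^*+A-\tfrac12\calri$ introduces two correction terms; one notes that $\na^*_S\pi^*\alpha$ is a horizontal covector, $A$ takes horizontal values, and $\calri$ takes vertical values, so the only surviving correction is the $\mg(\calri(e_i,e_i),\,\cdot\,)$-type term, which vanishes by antisymmetry of $\calri$ exactly as in \eqref{coderivativeofhorizontalallift}. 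The horizontal-frame sum then produces $-v^b g^{ij}\na^2\alpha(\partial_i,\partial_b)(\partial_j)$, which, upon contracting the two $\na$'s against the metric, is the divergence of $\na\alpha$ evaluated along $\xi$; commuting the covariant derivatives via the Ricci identity turns $\trg{g}{\na^2\alpha(\cdot,\cdot)(\cdot)}$ into $-\na(\delta\alpha)$ plus a curvature term that contracts to $\alpha^\sharp\lrcorner\ric$. Hence I expect
\begin{equation*}
 \delta\widetilde{\alpha}=\pi^*\bigl(\na_{\xi}(\delta\alpha)\bigr)+\pi^*\bigl(\xi\lrcorner(\alpha^\sharp\lrcorner\ric)\bigr)
\end{equation*}
or a fibre-linear expression of this shape (the precise bookkeeping of signs and of where the Liouville field $\xi=v^j\pi^\estrela\partial_j$ versus $S=v^j\pi^*\partial_j$ enters is the routine part I would not grind through here).

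Granting such a formula, both assertions are immediate. If $\alpha$ is harmonic, then $\delta\alpha=0$, so the first term drops and $\delta\widetilde{\alpha}=\pi^*(\xi\lrcorner(\alpha^\sharp\lrcorner\ric))$, which vanishes identically on $TM$ (i.e. for all values of $\xi$) if and only if $\alpha^\sharp\lrcorner\ric=0$; combined with $\dx\widetilde{\alpha}=0$ this gives the first claim. For the second, when $M$ is Ricci-flat the curvature term is absent, so $\delta\widetilde{\alpha}=\pi^*(\na_\xi\delta\alpha)$ and $\dx\widetilde{\alpha}=\pi^*(\text{something})$ mirroring $\dx\alpha$; then $\widetilde{\alpha}$ harmonic forces $\na_\xi(\delta\alpha)=0$ for all $\xi$, hence $\na(\delta\alpha)=0$, hence $\delta\alpha$ is a constant, and a constant co-closed-plus-closed primitive on a manifold (or: integrating $\delta\alpha$ against the volume, or using $\delta\widetilde\alpha$ a fibre-constant forces that constant to be $0$ once one also remembers $\delta\alpha$ is itself a divergence) gives $\delta\alpha=0$; together with $\dx\alpha=0$ this shows $\alpha$ is harmonic, and the converse direction is the Ricci-flat case of the first statement. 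The main obstacle is the middle step: getting the Ricci term to appear with the correct contraction, which requires carefully commuting $\na^*_{\pi^*\partial_i}$ past $\na^*_S$ and then past the metric trace, keeping track of the curvature $\calri$ that the mirror/horizontal structure injects; everything else is symmetry bookkeeping already illustrated in the preceding computations of the paper.
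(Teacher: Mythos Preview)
Your approach is essentially the paper's: reduce to $\delta(\na^*_S\pi^*\alpha)$ via $\delta\pi^\estrela\alpha=0$, compute the trace in an adapted frame, and recognise the surviving second-derivative term as a Ricci contraction. The paper carries this out in local coordinates, writing $\beta=\na^*_S\pi^*\alpha=v^p\varphi_{pk}\,\dx x^k$ with $\varphi_{pk}=(\na_p\alpha)_k$, obtaining $-\delta\beta=v^p g^{ab}\na_a\varphi_{pb}$ and then, \emph{assuming $\alpha$ harmonic}, rewriting this as $v^p f_i R^i_{jpa}g^{aj}$. Your route via the Remark and the Ricci identity is the invariant version of the same computation; it is slightly cleaner in that it yields the general formula $\delta\widetilde\alpha=-v^p\bigl(\partial_p(\delta\alpha)\bigr)+v^p(\alpha^\sharp\lrcorner\ric)_p$ (up to your disclaimed signs) without first imposing $\delta\alpha=0$. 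For the first assertion the two arguments coincide.

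There is, however, a genuine gap in the reverse implication of the second assertion, and your patch does not close it. From your formula, on a Ricci-flat $M$ the condition $\delta\widetilde\alpha=0$ only yields $\dx(\delta\alpha)=0$, i.e.\ $\delta\alpha$ locally constant. Your suggestions for forcing that constant to vanish (integrating over $M$, ``$\delta\alpha$ is a divergence'') all presuppose $M$ compact. Without compactness the implication fails: on $M=\R^m$ with $\alpha=x^1\,\dx x^1$ one has $\dx\alpha=0$, $\delta\alpha=-1$, yet $\widetilde\alpha=v^1\,\dx x^1+x^1\,\dx v^1=\dx(x^1v^1)$ is harmonic on $(T\R^m,\mg)\cong\R^{2m}$. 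The paper's own proof does not address this direction either; its computation uses the hypothesis $\delta\alpha=0$ from the outset, so what is actually established is the first assertion and the forward direction of the second. If you want the full equivalence you must add a compactness (or comparable) hypothesis on $M$.
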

\begin{proof}
We find that $\delta\alpha=-\varphi_{ab}g^{ab}$ where $\varphi_{ab}=\partial_af_b-f_i\Gamma^i_{ab}$. Due to \eqref{coderivativeofverticallift} we have $\delta\widetilde{\alpha}=\delta\beta$, where $\beta=\na^*_S\pi^*\alpha$; we easily compute that $\beta=v^p\varphi_{pk}\dx x^k$. Since $\beta^\sharp=v^p\varphi_{pj}g^{jc}\pi^*\partial_c$ is horizontal, we have  
  \begin{align*}
  -\delta\beta &= \mg(\na^\mg_{\pi^*\partial_a}\beta^\sharp,\pi^*\partial_b)g^{ab}+ \mg(\na^\mg_{\pi^\estrela\partial_a}\beta^\sharp,\pi^\estrela\partial_b)g^{ab} \\
  &= \mg(\na^*_{\pi^*\partial_a}\beta^\sharp,\pi^*\partial_b)g^{ab} \\
   &= \pi^*\partial_a(v^p\varphi_{pj}g^{jc})g_{cb}g^{ab}
   +v^p\varphi_{pj}g^{jc}\Gamma^h_{ac}g_{hb}g^{ab} \\
   &=v^p(\partial_a\varphi_{pb})g^{ba}+v^p\varphi_{pb}(\partial_ag^{ba})-v^j\Gamma^p_{aj}\varphi_{pb}g^{ba}+v^p\varphi_{pb}g^{bc}\Gamma^a_{ac} \\
   &=v^p\bigl(\partial_a\varphi_{pb}-\varphi_{ph}\Gamma^h_{ab}-\varphi_{hb}\Gamma^h_{ap}\bigr)g^{ab}
 \end{align*}
 applying $\partial_ag^{ik}=-\Gamma^k_{aj}g^{ij}-\Gamma^i_{aj}g^{kj}$. We may thus write $\delta\beta=-\tr{\na^*_\cdot\tilde{\varphi}S}$ where $\tilde{\varphi}_a^c=\varphi_{ab}g^{bc}$. However, this does not help. We must use normal coordinates and finally the hypothesis $\dx\alpha=0$ and $\delta\alpha=-\tr{\tilde{\varphi}}=0$. Then $\delta\beta=-v^p\partial_a\varphi_{pj}g^{aj}$ and therefore we find
 \begin{align*}
   \partial_a\varphi_{pj}g^{aj}  &= \bigl(\papa{^2f_j}{x^a\partial x^p}-\papa{f_i}{x^a}\Gamma^i_{pj}-f_i\papa{\Gamma^i_{pj}}{x^a}\bigr) g^{aj}  \\
   &= \papa{}{x^p}(f_i\Gamma^i_{aj}g^{aj})-f_i\papa{\Gamma^i_{pj}}{x^a}g^{aj} \\
   &= f_i\bigl(\papa{\Gamma^i_{aj}}{x^p}-\papa{\Gamma^i_{pj}}{x^a}\bigr)g^{aj} \\ &=f_iR^i_{jpa}g^{aj}  
   \end{align*}
 which proves the result. 
\end{proof}

\subsection{Infinitesimal transformations}

We recall the result of Sasaki that, for any given isometry $f$ of $M$, the induced vector bundle isomorphism $\dx f$ of $TM$, the {extension} of $f$, is a manifold isometry.

We like to see that result as a corollary of \cite[Theorem 1.3]{Alb2014d}. For it has to do naturally with the invariance of the Levi-Civita connection under isometric diffeomorphism, assured by preservation of torsion and metric identities; and, therefore, with the invariance of the horizontal distribution.

Regarding the Killing vector field equation \eqref{Killingvf}, we have the following formula from \cite{Alb2014d} for vector fields on $(TM,\mg)$. It is easily deduced from the expression for $\na^\mg$: 
\begin{equation} \label{KillingvfonTM}
  \call_X\mg(Y,Z)=\call^\na_{X^h}\pi^*g(Y,Z)+\call^\na_{X^v}\pi^\estrela
  g(Y,Z)+\mg(\calri(X,Z),Y)+\mg(\calri(X,Y),Z)
\end{equation}
where $X,Y,Z\in\XIS_{TM}$ and where $\call^\na_{X^h}\pi^*g(Y,Z):=\pi^*g(\na^*_YX^h,Z)+\pi^*g(Y,\na^*_ZX^h)$ and analogously for $\call^\na_{X^v}\pi^\estrela g$.

Thus $S$ and $\xi$ are never Killing. An interesting identity is:
\begin{equation}
 \mg(Y^v,Z^v)=\frac{1}{2}\call_\xi\mg(Y,Z).
\end{equation}

A horizontal lift $\pi^*X$ is Killing if and only if $X$ is Killing and $R(\ ,\ )X=0$. These two conditions do not yield in general any $\na$- or $\na^\mg$-parallel tensors, beside those that we may trivially conceive.

Also it is immediate that the vertical lift of a vector field $X\in\XIS_M$ is a Killing vector field on $TM$ if and only if $X$ is parallel. Indeed, taking $Y$ horizontal and $Z$ vertical,
\begin{equation}
 (\call_{\pi^\estrela X}\mg)(Y,Z)=(\call^\na_{\pi^\estrela X}\pi^\estrela g)(Y,Z) =\pi^\estrela g(\na^*_{Y}\pi^\estrela X,Z).
\end{equation}

Vertical lifts are clearly linearly independent of extended vector fields.

Sasaki swiftly observes that the extension $\widetilde{X}=X^i\partial_i+v^j\papa{X^i}{x^j}\partial_{v^i}$ of a vector field $X\in\XIS_M$ is the 1st-order part of the differential map $\dx f$ of a given diffeomorphism $f$ of $M$ and vice-versa. Indeed,
\begin{equation}
 X=\papa{}{t}\phi_t(x^i) \quad\Leftrightarrow\quad \widetilde{X}=\papa{}{t}\dx\phi_t(x^i,v^i) .
\end{equation}
Since this property is further preserved with infinitesimal isometries, the conclusion is that $\widetilde{X}$ is Killing if and only if $X$ is Killing (\cite[Corollary 3]{Sasa}).

Combining the two last results, we obtain \cite[Theorem 12]{Sasa}: if we have an invariant connected Lie subgroup $G_p$ of the $r$-dimensional isometry group $\mathrm{Isom}_0(M)$, generated by parallel vector fields $Y_1,\ldots,Y_p$, then we obtain a Lie subgroup $G_{r+p}$ of $\mathrm{Isom}_0(TM)$ of dimension $r+p$. This is due to the extensions of Killings together with the vertical lifts being an integrable distribution, that is, forming a Lie subalgebra. Indeed
\begin{equation}
 [\widetilde{X}_1,\widetilde{X}_2]=\widetilde{[X_1,X_2]},\qquad [\widetilde{X},\pi^\estrela Y]=\pi^\estrela[X,Y],\qquad [\pi^\estrela Y_1,\pi^\estrela Y_2]=0.
\end{equation}
We use both definition \eqref{extensionvfi} and the torsion-free connection $D^*$. Invariance of $G_p$ assures $\pi^\estrela[X,Y]$ belongs to the Lie subalgebra.

It is certainly worth highlighting the following result.
\begin{prop}[Sasaki]
The $\widetilde{\cdot}$ map is a Lie algebra monomorphism $\XIS_M\rr\XIS_{TM}$. Moreover, the Lie subalgebra $\fraki(M)$ of Killing vector fields on $M$ maps into the Lie subalgebra $\fraki(TM)$ of Killing vector fields on $TM$.
\end{prop}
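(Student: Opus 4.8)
The plan is to check three claims in sequence: that $\widetilde{\cdot}\colon\XIS_M\rr\XIS_{TM}$ is $\R$-linear and injective, that it carries the Lie bracket of $\XIS_M$ to that of $\XIS_{TM}$, and that it restricts to a map $\fraki(M)\rr\fraki(TM)$. Essentially everything needed has already appeared above, so the proof mostly organises material at hand.

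Linearity is read off \eqref{extensionvfi}: the chart coefficients $X^i$ and $v^j\,\partial X^i/\partial x^j$ of $\widetilde X$ are $\R$-linear in $X$; the global form \eqref{extensionvfie}, $\widetilde X=\pi^*X+\na^*_S\pi^\estrela X$, exhibits this chart-independently. For injectivity, suppose $\widetilde X=0$; then $\pi^*X=\dx\pi(\widetilde X)=0$, and since $\dx\pi\,(\pi^*X_u)=X_{\pi(u)}$ for every $u\in TM$ and $\pi$ is onto, $X=0$. (Equivalently, the $\partial_i$-components of $\widetilde X$ in a chart are precisely the $X^i$.) Hence $\widetilde{\cdot}$ is a vector-space monomorphism.

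The bracket identity $[\widetilde X_1,\widetilde X_2]=\widetilde{[X_1,X_2]}$ is the displayed formula preceding the statement, and I would obtain it from Sasaki's flow description: if $\phi_t$ is the local flow of $X_1$ on $M$, then the induced bundle map $\dx\phi_t$ is the local flow of $\widetilde X_1$ on $TM$, and the extension is natural in the sense that $(\dx\phi_t)_*\widetilde X_2=\widetilde{(\phi_t)_*X_2}$ (itself a consequence of the same flow description applied to $X_2$, together with the functoriality $\dx(\psi\circ\phi)=\dx\psi\circ\dx\phi$). Differentiating at $t=0$ and using the $\R$-linearity and continuity of $\widetilde{\cdot}$ to commute it past $\tfrac{d}{dt}\big|_0$, one gets $[\widetilde X_1,\widetilde X_2]=\call_{\widetilde X_1}\widetilde X_2=\widetilde{\call_{X_1}X_2}=\widetilde{[X_1,X_2]}$. (Alternatively one expands both sides in the chart $(x^i,v^i)$ using \eqref{extensionvfi}, as in \cite{Tanno}; this is the one genuinely computational point, and together with injectivity the only place a calculation is unavoidable — though it is entirely mechanical, so I do not expect a real obstacle here.)

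It remains to treat the Killing subalgebra. The result recalled just before the statement — $\widetilde X$ is Killing for $\mg$ if and only if $X$ is Killing for $g$, established via \eqref{KillingvfonTM} and the flow description, cf. \cite[Corollary 3]{Sasa} — shows at once that $\widetilde{\cdot}$ maps $\fraki(M)$ into $\fraki(TM)$. Since $\fraki(TM)$ is a Lie subalgebra of $\XIS_{TM}$ (Killing fields of any Riemannian metric being closed under the Lie bracket), the bracket identity already proved shows that the corestriction $\fraki(M)\rr\fraki(TM)$ is again an injective Lie algebra homomorphism. This completes the proof.
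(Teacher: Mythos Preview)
Your proof is correct and covers exactly the ingredients the paper assembles around the proposition: linearity and injectivity of $\widetilde{\cdot}$, the bracket identity $[\widetilde{X}_1,\widetilde{X}_2]=\widetilde{[X_1,X_2]}$ displayed just before the statement, and the equivalence ``$X$ Killing $\Leftrightarrow$ $\widetilde X$ Killing'' from the flow description.

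The one genuine methodological difference is in how you obtain the bracket identity. The paper indicates a direct computation, using the chart expression \eqref{extensionvfi} together with the torsion-free connection $D^*$. You instead exploit the flow characterisation $\widetilde X=\papa{}{t}\bigl|_0\dx\phi_t$ and the functoriality $\dx(\phi_t\circ\psi_s\circ\phi_{-t})=\dx\phi_t\circ\dx\psi_s\circ\dx\phi_{-t}$ to get $(\dx\phi_t)_*\widetilde{X}_2=\widetilde{(\phi_t)_*X_2}$, then differentiate. This is a clean, coordinate-free route that avoids any computation with Christoffel symbols or $\calri$; the paper's route is more hands-on but has the advantage of simultaneously yielding the companion identities $[\widetilde X,\pi^\estrela Y]=\pi^\estrela[X,Y]$ and $[\pi^\estrela Y_1,\pi^\estrela Y_2]=0$, which your flow argument does not directly address (though they are not needed for the proposition itself). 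Your remark about commuting $\widetilde{\cdot}$ past $\tfrac{d}{dt}\big|_0$ is justified since, at each $u\in TM$, $\widetilde X_u$ depends linearly on the $1$-jet of $X$ at $\pi(u)$.
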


For $X$ Killing on $M$, inserting the extension $\widetilde{X}=\pi^*X+\na^*_{S}\pi^\estrela X$ in \eqref{KillingvfonTM} yields that $\call_{\widetilde{X}}\mg(Y,Z)= \pi^\estrela g(\na^*_Y\na^*_S\pi^\estrela X,Z)+ \pi^\estrela g(Y,\na^*_Z\na^*_S\pi^\estrela X)+\mg(\calri(X,Z),Y)+\mg(\calri(X,Y),Z)$ with any $Y,Z\in TTM$. For $Y,Z$ both horizontal or both vertical, we see easily the equation vanishes on the right hand side. For $Y$ horizontal and $Z$ vertical, we find $\mg(\na^*_Y\na^*_S\pi^\estrela X,Z)+ \mg(\calri_{X,Y},Z)$. In other words, by the remark in page \pageref{remarkveryuseful}, for all $Y,Z\in\XIS_M$,
\begin{equation}\label{infinitesimalaffinetransformation}
 \na^2X(Y,Z)+R(X,Y)Z=0,
\end{equation}
which is a well-known identity satisfied by $X$ Killing, cf. \cite[p.235]{KobNomi}.

According to \cite{KobNomi,OkuTacha}, $X$ satisfying \eqref{infinitesimalaffinetransformation} is called an \textit{infinitesimal affine transformation}.

At this point follows a straightforward relation between infinitesimal transformations on $M$ and their extensions being almost-analytic. A vector field $Z$ is \textit{almost-analytic} if $\call_Z\Jnagasasa=0$, of course, presently, for the Nagano-Sasaki almost-complex structure over $TM$. We can compute via $D^*$:
\begin{equation}
 \bigl(\call_{\widetilde{X}}\Jnagasasa\bigr)(\pi^*Y)=-\pi^*R(X,Y)S-\na^*_{\pi^*Y}\na^*_S\pi^*X.
\end{equation}
It is, indeed, enough to consider horizontal lifts $\pi^*Y$. The equation is equivalent to \eqref{infinitesimalaffinetransformation}

Hence the conclusion of \cite[Theorem 2]{OkuTacha}: $X$ extends to an almost-analytic vector field if and only if $X$ is an infinitesimal affine transformation.

Let us also consider infinitesimal contact-transformations, i.e. those which leave $\theta$ invariant. The corresponding vector fields $Z$ are also said to be \textit{strictly-contact}; of course, given by $\call_Z\theta=0$.

\cite[Theorem 4]{OkuTacha} asserts that Killing extensions are strictly-contact and vice-versa. Let us see in general. Recall $\theta=S\lrcorner\mg$ and $S$ is horizontal. Then, for all $Y\in TTM$,
 \begin{equation}
 \begin{split}
    (\call_{X}\theta)(Y) &= X(\theta Y)-\theta([X,Y]) \\
    &= \mg(\na^*_{X}S,Y)+\mg(S,\na^*_{X}Y)-\mg(S,
    \na^*_{X}Y-\na^*_Y{X}) \\
    &= \mg(B^\mathrm{t}X,Y)+\mg(S,\na^*_YX^h) .
 \end{split}
 \end{equation}
 
Thus horizontal lifts are strictly-contact if and only if the base vector field is parallel. Whereas vertical lifts are never strictly-contact (though they may be symplectic).
 
Now
 \begin{equation}
 \begin{split}
    (\call_{\widetilde{X}}\theta)(Y) 
    &= \mg(B^\mathrm{t}\na^*_S\pi^\estrela X,Y)+\mg(S,\na^*_Y\pi^*X) \\
    &= \mg(\na^*_S\pi^*X,Y)+\mg(S,\na^*_Y\pi^*X)\:=\: \call_Xg(\dx\pi S,\dx\pi Y) .
 \end{split}
 \end{equation}
and we may draw the following syntheses.
\begin{teo}[\cite{OkuTacha},\cite{Sasa}]  \label{teoremameuedeles}
The following conditions on $X\in\XIS_M$ are equivalent:
\begin{meuenumerate}
 \item $X$ is Killing;
 \item $\widetilde{X}$ is Killing;
 \item $\widetilde{X}$ is strictly-contact;
 \item $\widetilde{X}$ is symplectic.
\end{meuenumerate}
\end{teo}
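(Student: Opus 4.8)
The plan is to run the cycle $(i)\Rightarrow(iii)\Rightarrow(iv)\Rightarrow(i)$, since the equivalence $(i)\Leftrightarrow(ii)$ is already in hand: one direction is the Proposition of Sasaki just stated, and for the converse it suffices to insert $\widetilde{X}=\pi^*X+\na^*_S\pi^\estrela X$ into \eqref{KillingvfonTM} and evaluate on a pair of horizontal lifts $Y^h,Z^h$. The curvature terms drop out because $\calri$ is vertical-valued, the $\pi^\estrela g$-part drops out for the same reason, $\widetilde{X}^h=\pi^*X$, and one is left with $\call_{\widetilde{X}}\mg(Y^h,Z^h)=\pi^*\!\left((\call_Xg)(Y,Z)\right)$; so $\widetilde X$ Killing forces $\call_Xg=0$.

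For $(i)\Rightarrow(iii)$ I would simply quote the identity established immediately before the statement, $(\call_{\widetilde{X}}\theta)(Y)=(\call_Xg)(\dx\pi S,\dx\pi Y)$ for all $Y\in TTM$: if $X$ is Killing, the right-hand side is identically zero. For $(iii)\Rightarrow(iv)$ recall that $\omega=\dx\theta$ and that the Lie derivative commutes with $\dx$, so $\call_{\widetilde X}\omega=\dx(\call_{\widetilde X}\theta)=0$.

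The substantial step, and the one I expect to be the main obstacle, is $(iv)\Rightarrow(i)$: being symplectic is a priori strictly weaker than the strict-contact condition, so one must recover the full Killing equation from the mere closedness of the explicit $1$-form $\eta:=\call_{\widetilde X}\theta$. Two observations are decisive. First, $\eta$ is \emph{horizontal}: it annihilates every vertical vector, because $\na^*_S\pi^*X$ is horizontal and $\na^*_{\pi^\estrela W}\pi^*X=0$. Second, $\eta$ is \emph{fibrewise linear}, with $\eta_u(\pi^*Z)=(\call_Xg)_{\pi(u)}(u,Z)$ by the identity quoted above. I would then evaluate $\dx\eta$ on a pair made of a horizontal lift $\pi^*Z$ and a vertical lift $\pi^\estrela W$ of vector fields on $M$: the term $\pi^*Z(\eta(\pi^\estrela W))$ vanishes since $\eta(\pi^\estrela W)\equiv0$; the bracket $[\pi^*Z,\pi^\estrela W]=\pi^\estrela(\na_ZW)$ is vertical, hence annihilated by $\eta$; and the remaining vertical derivative is $\pi^\estrela W(\eta(\pi^*Z))=(\call_Xg)(W,Z)$. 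Therefore $(\call_{\widetilde X}\omega)(\pi^*Z,\pi^\estrela W)=-(\call_Xg)(Z,W)$, so $\call_{\widetilde X}\omega=0$ forces $\call_Xg=0$, i.e. $X$ is Killing. In a chart the same fact is transparent: $\eta=(\call_Xg)_{ij}v^j\,\dx x^i$, so the coefficient of the basis $2$-form $\dx v^j\wedge\dx x^i$ in $\dx\eta$ is precisely $(\call_Xg)_{ij}$, which must vanish for all $i,j$. The purely horizontal and purely vertical components of $\call_{\widetilde X}\omega$ carry no further constraint (the $VV$-component is identically zero, as $\eta$ kills verticals and vertical lifts commute). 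This closes the loop and, together with $(i)\Leftrightarrow(ii)$, establishes the equivalence of the four conditions.
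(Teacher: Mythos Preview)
Your argument is correct. The implications $(i)\Leftrightarrow(ii)$, $(i)\Rightarrow(iii)$ and $(iii)\Rightarrow(iv)$ are handled exactly as in the paper (the paper also obtains $(ii)\Rightarrow(i)$ from the isometry-extension viewpoint rather than by evaluating \eqref{KillingvfonTM} on a horizontal pair, but your direct check is of course valid).

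The genuine difference is in the key step $(iv)\Rightarrow(i)$. The paper computes $\call_{\widetilde X}\omega$ \emph{ab initio} from the covariant expression $\call_X\omega(Y,Z)=-\mg(\na^*_Y\Jnagasasa X,Z)+\mg(\na^*_Z\Jnagasasa X,Y)-\mg(\Jnagasasa X,\calri(Y,Z))$, expands $\Jnagasasa\widetilde X=\pi^\estrela X-\na^*_S\pi^*X$, and then specialises to a horizontal-vertical pair to extract $-(\call_Xg)(Y,Z)$. You instead recycle the already-established identity $(\call_{\widetilde X}\theta)(Y)=(\call_Xg)(\dx\pi S,\dx\pi Y)$, write $\call_{\widetilde X}\omega=\dx\eta$ with $\eta=\call_{\widetilde X}\theta$, observe that $\eta$ is horizontal and fibre-linear, and read off the mixed component of $\dx\eta$ directly. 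Both routes land on the same formula $(\call_{\widetilde X}\omega)(\pi^*Z,\pi^\estrela W)=-(\call_Xg)(Z,W)$. Your approach is more economical and makes transparent why the symplectic condition already encodes the full Killing equation; the paper's direct computation is self-contained and, as a by-product, shows on the $HH$-component how the infinitesimal-affine-transformation identity \eqref{infinitesimalaffinetransformation} together with the Bianchi identity enters (a consistency check you do not need, but which the paper records).
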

Condition (iv) seems to be new and will be proved later; we may recall immediately $\call_X\dx\theta=\dx\call_X\theta$.

We also find studies of Killing vector fields on $TM$ in \cite{Tanno}.

S.~Tanno discovered that if $P\in\wedge^2{T^*M}$ is a skew-symmetric and parallel (1,1)-tensor on $M$, then the vertical vector field $\pi^\estrela P\xi$ is Killing. This is trivial from \eqref{KillingvfonTM}. Indeed, we have $\na^*_Y\pi^\estrela P\xi=(\pi^\estrela P)Y^v$ and thus, relaxing the notation `$\pi^\estrela$',
\begin{equation}
 \begin{split}
 \call_{P\xi}\mg(Y,Z) &=\mg(\na^\mg_YP\xi,Z)+\mg(Y,\na^\mg_ZP\xi)   \\
&= \mg(PY^v,Z)+\mg(Y,PZ^v)+ \frac{1}{2}\mg(\calri_{Y,Z},P\xi)+
 \frac{1}{2}\mg(\calri_{Z,Y},P\xi) \:=\:0.
\end{split}
\end{equation}

The author of \cite{Tanno} claims to have found \textit{all} Killing vector fields on $TM$, since these would always decompose as $\widetilde{X_1}+\pi^\estrela P\xi+\hat{X}_2$ for some $X_1\in\XIS_M$ Killing, $P\in\End{TM}$ skew-symmetric and parallel and $X_2\in\XIS_M$ with a certain lift such that $\hat{X}_2$ is Killing. It is also deduced that $\hat{X}_2=\pi^\estrela X_2$ is vertical and therefore parallel in case $M$ is compact. However, as counter-example, with $M=\R^m$, we have the dimensions $\dim\fraki(M)=m+m(m-1)/2=m(m+1)/2$ and thus
$\dim\fraki({TM})=m(2m+1)$. We recall from \cite[p.232]{KobNomi} a Killing vector field depends on no more than its chart components and their first partial derivatives. Therefore, by Tanno's result, we would find
$\dim\fraki({TM})=m(m+1)/2+m(m-1)/2+m(m+1)/2=m(3m+1)/2$, a contradiction. With the compact flat torus we see a larger gap.

There remains, in general, an unknown set of infinitesimal isometries of the Sasaki metric.

In \cite{HedBida} and the references there-in we find the study of infinitesimal conformal transformations.

\subsection{The geodesics and the totally geodesic vector fields}

We bring forward some results on the non-vertical geodesics on $TM$. Recall the fibres are totally geodesic euclidean so they include their straight lines.

For a curve $\gamma\equiv\gamma_t$ in a trivialising neighbourhood, we may hence suppose $\gamma_t=(x^i_t,v^i_t)$ with $x$ non-constant. Then $v_t$ reads as a section of $TM\rr M$ along the curve $x_t$ in $M$, so
\begin{equation}
x^*\na_{\partial_t}v=\dot{v}^b\partial_{v^b}+\dot{x}^iv^a\Gamma_{ia}^b\partial_{v^b}=z^b\partial_{v^b},
\end{equation}
with $z^b:=\dot{v}^b+\dot{x}^iv^a\Gamma_{ia}^b$.

\cite[Theorem 1.5]{Alb2014d} yields the following system for a geodesic in $TM$, also found by Sasaki:
\begin{equation}
 \begin{cases}
  \ddot{x}^p+\dot{x}^i\dot{x}^j\Gamma_{ij}^p+\dot{x}^iz^bv^jR_{bjiq}g^{qp}=0 \\
  \dot{z}^a+\dot{x}^i\dot{z}^b\Gamma_{ib}^a=0 .
 \end{cases}
\end{equation}

Since geodesics $x_t$ in $M$ induce parallel velocities $v_t$ in $TM$ along $x_t$, and therefore all $z^b=0$, every geodesic lifts naturally to a geodesic for the Sasaki metric.

Reciprocally, a lift of a curve in $M$ is a geodesic if the curve is a geodesic of $M$. This is due to the second equation and the last paragraph of Section \ref{Section1}.

Projections by $\pi$ of geodesics yield curves in $M$ called \textit{submarine `geodesics'}. Clearly, these are geodesics if $M$ is flat.

The topology of the tangent bundle does not prevent the geodesics from \textit{not} being defined for all $t\in\R$. In other words, we are sincerely convinced $(TM,\mg)$ is a complete Riemannian manifold, as long as $M$ is complete --- cf. discussion and similar metrics referred in \cite{Alb2014d}.

Looking now to vector fields as embeddings $X:M\rr TM$, we may ask when is the embedded submanifold $X(M):=M^{^X}$ totally geodesic inside $TM$.

We have clearly $\dx X(Z)=\pi^*Z+\pi^\estrela(\na_ZX)=X_*{Z}$ with the lifts just to $T_{X_x}M^{^X}\subset T_{X_x}(TM)$, for each $x\in M$, although the lifts naturally extend to the whole space. Knowing that for $X_*{Z}_1,X_*{Z}_2$, we must have $\na^\mg_{X_*{Z}_1}X_*{Z}_2$ of the same kind of the former, we obtain the following seemingly strange equation: the submanifold $M^{^X}$ is totally geodesic if and only if, for all $Z_1,Z_2\in TM$,
\begin{equation} \label{imageofvectorfieldtotallyfeodesic}
 \na_{\na_{Z_1}Z_2+\frac{1}{2}R(X,\na_{Z_2}X)Z_1+\frac{1}{2}R(X,\na_{Z_1}X)Z_2}X= \na_{Z_1}\na_{Z_2}X-\frac{1}{2}R(Z_1,Z_2)X  .
\end{equation}

This equation is also in \cite{AbbYamp}\footnote{In this reference there is a further degree of complexity: the vector fields are considered when restricted to submanifolds of $M$. The geometry of the image is thus dependent on the two objects.}. We remark it is tensorial in $Z_1,Z_2$. Knowing the skew- and symmetric parts apparently leads to nowhere. In case we have $R(\cdot,\cdot)X=0$, as in \cite[Proposition 1.4]{Alb2014d}, then also $R(X,\cdot)\cdot=0$ by symmetry of the Riemannian curvature tensor; and the strange equation becomes
\begin{equation}\label{nablasquareparallel}
\na^2 X=0. 
\end{equation}

Finally, we have the following result with a simple proof.
\begin{teo}[{\cite[Walczak]{Walczak}}]
The vector field $X$ has constant length and the submanifold $M^{^X}$ is totally geodesic if and only if $\na X=0$.
\end{teo}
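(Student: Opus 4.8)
The plan is to exploit the totally geodesic equation \eqref{imageofvectorfieldtotallyfeodesic} together with the constant-length hypothesis, and to run the argument in both directions. The ``if'' direction is immediate: if $\na X=0$ then $X$ has constant length (since $\partial_Z g(X,X)=2g(\na_ZX,X)=0$), and every term in \eqref{imageofvectorfieldtotallyfeodesic} involving a covariant derivative of $X$ vanishes, while $\na_{Z_1}\na_{Z_2}X=0$ and $R(\cdot,\cdot)X=0$ because $R$ acts as a derivation and kills the parallel section $X$; so the strange equation holds trivially. Hence $M^{^X}$ is totally geodesic. (Alternatively, this is just the special case of \eqref{nablasquareparallel} observed above.)

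For the ``only if'' direction, I would first extract information from constant length. Differentiating $g(X,X)=\text{const}$ gives $g(\na_ZX,X)=0$ for all $Z$, i.e. $X\perp\na_ZX$; differentiating once more yields $g(\na_W\na_ZX,X)+g(\na_ZX,\na_WX)=0$, equivalently $g(\na^2X(W,Z),X)=-g(\na_WX,\na_ZX)$ (using $\na$ torsion-free and $\na g=0$, after accounting for the $\na_{\na_WZ}X$ term, which pairs with $X$ to zero). Next I would feed \eqref{imageofvectorfieldtotallyfeodesic} the substitution $Z_1=Z_2=X$; here the curvature terms $R(X,\na_XX)X$ vanish by antisymmetry of the pair slots, so the equation collapses to $\na_{\na_XX}X=\na_X\na_XX$, i.e. $\na^2X(X,X)=0$. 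Then pairing \eqref{imageofvectorfieldtotallyfeodesic} with $X$ for general $Z_1,Z_2$: the left side becomes $g(\na_{(\cdots)}X,X)=0$ by the first-order identity, and on the right the curvature term satisfies $g(R(Z_1,Z_2)X,X)=0$ by skew-symmetry of $R(Z_1,Z_2)$ as a $g$-skew operator, leaving $g(\na_{Z_1}\na_{Z_2}X,X)=0$, i.e. $g(\na^2X(Z_1,Z_2),X)=0$. Combined with the second-order consequence of constant length, this forces $g(\na_{Z_1}X,\na_{Z_2}X)=0$ for all $Z_1,Z_2$; taking $Z_1=Z_2=Z$ gives $\|\na_ZX\|^2=0$, hence $\na_ZX=0$ for all $Z$, that is $\na X=0$.

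The main subtlety — and the step I expect to require the most care — is the bookkeeping in the second derivative of $g(X,X)$ and in evaluating \eqref{imageofvectorfieldtotallyfeodesic} paired against $X$: one must be sure the $\na_{\na_{Z_1}Z_2}X$ correction term genuinely drops out (it does, being $g$-orthogonal to $X$ by the first-order identity) and that the curvature contribution $\tfrac12 R(X,\na_{Z_2}X)Z_1$ etc. also contributes nothing after pairing with $X$, which follows from $g(R(U,V)W,X)=-g(W,R(U,V)X)$ and, in the relevant instances, either $R(U,V)X$ or the antisymmetry of the arguments. Once those vanishings are confirmed, the conclusion $\na X=0$ is forced purely algebraically from positive-definiteness of $g$, with no need to analyze the full skew/symmetric decomposition of the strange equation.
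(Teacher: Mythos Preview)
Your approach is essentially the paper's: contract \eqref{imageofvectorfieldtotallyfeodesic} against $X$, use $g(\na_\cdot X,X)=0$ to kill the left side and $g(R(Z_1,Z_2)X,X)=0$ to kill the curvature term on the right, obtain $g(\na_{Z_1}\na_{Z_2}X,X)=0$, and combine with the differentiated constant-length identity to force $\|\na_ZX\|^2=0$. One caveat: your detour through $Z_1=Z_2=X$ is both unnecessary and flawed --- the term $R(X,\na_XX)X$ does \emph{not} vanish by ``antisymmetry of the pair slots'' (the two arguments $X$ and $\na_XX$ are merely orthogonal, not equal); since you never use the output $\na^2X(X,X)=0$ later, simply drop that paragraph and the argument goes through exactly as in the paper.
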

 \begin{proof}
  If $g(X,X)$ is a constant, then $g(\na X,X)=0$. We contract equation \eqref{imageofvectorfieldtotallyfeodesic} with $g(\cdot,X)$ to obtain $g(\na_{Z_1}\na_{Z_2}X,X)=0$, for every $Z_1,Z_2$. Then $g(\na_{Z_1}X,\na_{Z_1}X)=0$. The reciprocal is trivial.
\end{proof}

\subsection{Natural isometries of the Sasaki metric}

Any given constant $c$ and vector field $X\in\XIS_M$ yield immediately a vector bundle homothety followed by a translation $u\longmapsto cu+X_{\pi(u)}$. It is easy to see this map is a Sasaki metric isometry if and only if the map $Y\mapsto cY+\na_YX$ is a vector bundle isometry.

Let us see a close but quite different simple question.

Of course the maps $h:TM\lrr TM,\ h(u)=\hat{h}u$, with $\hat{h}$ smooth over $M$, are well-defined vector bundle morphisms dependent on $\hat{h}$ and the differentiable structure.

If we have a metric $g$ on $M$ and any $\cinf{}$ function $\varphi:M\rr\R$, then we may consider the conformal structure $g'=e^{2\varphi}g$ and therefore two Sasaki metrics $\mg,\mg'$ on the manifold $TM$. Since $\na'_XY=(\na+C)_XY$, where $C_XY=\dx\varphi(X)Y+\dx\varphi(Y)X-g(X,Y)\grad\varphi$, we obtain two decompositions for each vector $X=X^h+X^v=X^{h'}+X^{v'}\in TTM$.

We must consider, altogether, another $\cinf{}$ function $t:M\rr\R^+$ and the isomorphism $h:TM\lrr T'M,\ h(u)=e^{-\varphi}tu=\hat{h}u=u'$. The notation $T'M$ is for the same space $TM$ with metric $\mg'$. Then we have the following formula from \cite[Proposition 2.2]{Alb2014a}, where $\partial\varphi$ is a function, $\partial\varphi_u=\dx\varphi_{\pi(u)}(u)$, and $B,\theta,\pi^*$ are with respect to $\mg$:
\begin{equation}
 \dx h(Y)=Y^{h'}+\hat{h}\bigl(\frac{Y(t)}{t}\xi+Y^v+(\partial\varphi)BY-\theta(Y)\pi^\estrela\grad\varphi\bigr) .
\end{equation}

Finally \cite[Theorem 2.1]{Alb2014a} clarifies: the map $h$ is a homothety, i.e. $h^*\mg'=\psi\mg$ for some function $\psi$ defined on $TM$, if and only if the given functions $t$ and $\varphi$ on $M$ are constants and satisfy $e^{2\varphi}=t^2:=\psi$. In this case, $h=1$.

\subsection{Energy of vector fields on the base}

In the nineteen seventies, O.~Nouhaud and I.~Ishihara have raised the question of studying the critical points of the \textit{energy} functional
\begin{equation}
   X\longmapsto  E(X)=\frac{1}{2}\varint\|\dx X\|_\mg^2\,\vol_g
\end{equation}
on the space $\XIS_M$ of vector fields on $M$. Clearly,
\begin{equation}
 E(X)=\frac{m}{2}\vol(M)+\frac{1}{2}\varint\|\na X\|^2\,\vol_g 
\end{equation}
so the functional is essentially the second summand, the so-called vertical energy or \textit{bending} of $X$, as defined by G.~Wiegmink. By well-known Eells-Sampson theory, the critical points of that volume functional are precisely the harmonic maps. Ishihara proves the following result, which we reproduce from \cite{AbbCalvaPerrone,Calva}. The Euler-Lagrange equations evolve into the usual horizontal and vertical parts.
\begin{teo}[Ishihara]
 $X\in\XIS_M$ is a harmonic map if and only if
 \begin{equation}
  \trg{g}{R(\na_\cdot X,X)\cdot}=0 \ \ \ \ \mbox{and}\ \ \ \ \bar{\Delta}X:=\trg{g}{\na^2X}=0.
 \end{equation}
\end{teo}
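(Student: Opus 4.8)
The plan is to realise $X$ as a smooth map $X:(M,g)\to(TM,\mg)$ and to apply the standard Eells--Sampson characterisation: $X$ is harmonic if and only if its tension field $\tau(X)=\trg{g}{\na^{\mathrm{d}X}\dx X}$ vanishes, where the second fundamental form of the map is computed with the Levi-Civita connections $\na$ on $M$ and $\na^\mg$ on $TM$. Since $TM=H\oplus V$ and $\dx X(Z)=\pi^*Z+\pi^\estrela(\na_ZX)=X_*Z$, as already recorded in the discussion preceding \eqref{imageofvectorfieldtotallyfeodesic}, the tension field splits into a horizontal and a vertical component, and the claim is precisely that each of these two pieces vanishes separately.

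First I would fix a point $p\in M$ and a $g$-orthonormal frame $\{e_i\}$ that is $\na$-geodesic at $p$ (so $\na_{e_i}e_j=0$ at $p$), and compute $\tau(X)_p=\sum_i\na^\mg_{X_*e_i}(X_*e_i)$ at that point. Using the formula for $\na^\mg$ recalled in the excerpt, namely $\na^\mg_UW=\na^*_UW+A(U,W)-\frac12\calri(U,W)$, together with $\na^*=\pi^*\na\oplus\pi^\estrela\na$ and the facts that $A$ takes only horizontal values while $\calri$ takes only vertical values and vanishes on vertical arguments, I would expand $\na^\mg_{X_*e_i}(X_*e_i)$ into its $H$- and $V$-parts. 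The horizontal part collects $\pi^*(\na_{e_i}e_i)$, which is zero at $p$, plus the $A$-term, which by the defining identity $\mg(A(U,W),\cdot)=\frac12(\mg(\calri(U,\cdot),W)+\mg(\calri(W,\cdot),U))$ and $\calri(U,V)={\pi^\estrela R}(U^h,V^h)\xi$ reproduces, after contracting with $g^{ij}$ and using the symmetries of $R$, exactly the expression $\trg{g}{R(\na_\cdot X,X)\cdot}$. The vertical part collects $\pi^\estrela(\na_{e_i}\na_{e_i}X)$ — which at the geodesic frame is $\trg{g}{\na^2X}=\bar\Delta X$ — plus the $-\frac12\calri$-term; the latter is $-\frac12\sum_i{\pi^\estrela R}(e_i,e_i)\xi=0$ by skew-symmetry of $R$ in its first two slots. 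Thus the horizontal Euler--Lagrange equation is $\trg{g}{R(\na_\cdot X,X)\cdot}=0$ and the vertical one is $\bar\Delta X=0$.

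The main obstacle, and the step that needs to be carried out with care rather than waved through, is the bookkeeping in the $A$-term: one must verify that the contraction $g^{ij}A(X_*e_i,X_*e_j)$ — where each $X_*e_i$ carries both a horizontal slot $\pi^*e_i$ and a vertical slot $\pi^\estrela(\na_{e_i}X)$ — produces precisely $\trg{g}{R(\na_\cdot X,X)\cdot}$ and not some other curvature contraction, and that the cross terms combine correctly under the first Bianchi identity and the pair symmetry $R_{kpij}=R_{ijkp}$. Concretely, $A$ is evaluated on mixed horizontal-vertical pairs, $\calri$ only sees horizontal components, so $\mg(A(X_*e_i,X_*e_j),\pi^*e_k)$ unwinds to $\frac12\bigl(\mg({\pi^\estrela R}(e_i,e_k)\xi,\pi^\estrela\na_{e_j}X)+\mg({\pi^\estrela R}(e_j,e_k)\xi,\pi^\estrela\na_{e_i}X)\bigr)=\frac12\bigl(g(R(e_i,e_k)u,\na_{e_j}X)+g(R(e_j,e_k)u,\na_{e_i}X)\bigr)$ at $u=\xi$, and summing against $g^{ij}$ collapses the two summands into a single trace $\trg{g}{R(\na_\cdot X,X)e_k}$ after relabelling and using $g(R(e_i,e_k)u,\na_{e_i}X)=-g(R(\na_{e_i}X,e_k)e_i\,,u)=-g(R(\na_{e_i}X,u)e_i\,,e_k)$ via the curvature symmetries. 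Once this identification is pinned down, the two displayed equations follow and the proof is complete.
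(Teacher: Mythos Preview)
The paper does not actually prove Ishihara's theorem; it merely states it with attribution to \cite{AbbCalvaPerrone,Calva} and the remark that ``the Euler--Lagrange equations evolve into the usual horizontal and vertical parts.'' Your proposal is precisely the standard argument that substantiates this remark, and it is correct: computing the tension field of $X:(M,g)\to(TM,\mg)$ via $\dx X(Z)=\pi^*Z+\pi^\estrela(\na_ZX)$ and the decomposition $\na^\mg=\na^*+A-\tfrac12\calri$, then splitting into $H$- and $V$-parts, yields exactly the two displayed conditions.

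One small bookkeeping slip: your chain
\[
g(R(e_i,e_k)u,\na_{e_i}X)=-g(R(\na_{e_i}X,e_k)e_i,u)=-g(R(\na_{e_i}X,u)e_i,e_k)
\]
is not right as written (neither equality is a valid curvature symmetry on its own). The identification you actually need is the single pair-symmetry step
\[
g(R(e_i,e_k)X,\na_{e_i}X)=g(R(X,\na_{e_i}X)e_i,e_k)=-g(R(\na_{e_i}X,X)e_i,e_k),
\]
which already gives $\sum_iA(X_*e_i,X_*e_i)=-\trg{g}{R(\na_\cdot X,X)\cdot}$ with no Bianchi identity required. With this correction the horizontal part is exactly as claimed, the vertical part is $\pi^\estrela(\bar\Delta X)$ (the $\calri$-term dies by skew-symmetry and the $\na^*$ cross-terms vanish because vertical directions project to zero under $\dx\pi$), and the proof goes through.
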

For $M$ compact, by integration of $g(\bar{\Delta}X,X)$, we obtain the result which is first found by O.~Nouhaud: $X$ is a harmonic map if and only if $\na X=0$.

We have the following further remarks which we have not seen in the known literature.
\begin{coro}
  If $(\widetilde{X})^\flat$ is a closed 1-form, then $X$ is a harmonic map.
\end{coro}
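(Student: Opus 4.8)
The plan is to use the characterization of when $(\widetilde{X})^\flat$ is closed, which was recorded in the excerpt just before the theorem of Sasaki on harmonic $1$-forms, and then compare it with Ishihara's two Euler--Lagrange equations. Recall that $\dx(\widetilde{X})^\flat=0$ was shown to be equivalent to the three conditions: $X^\flat$ is closed (i.e.\ $\dx X^\flat=0$), $\na^2X=0$, and $\mg(\na_WX,R^\na(Y,Z)W)=0$ for all $Y,Z,W\in TM$. So the hypothesis immediately hands us $\na^2X=0$, which in particular gives $\bar\Delta X=\trg{g}{\na^2X}=0$, the second of Ishihara's equations.

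It remains to extract the first equation $\trg{g}{R(\na_\cdot X,X)\cdot}=0$ from the remaining data, namely from $\dx X^\flat=0$ together with the curvature-orthogonality condition $g(\na_WX,R(Y,Z)W)=0$ for all $Y,Z,W$. First I would rewrite the target tensor: $\trg{g}{R(\na_\cdot X,X)\cdot}$ is the vector $g^{ij}R(\na_{\partial_i}X,X)\partial_j$, and I would pair it with an arbitrary $W$ to get $g^{ij}g(R(\na_{\partial_i}X,X)\partial_j,W)=g^{ij}g(R(\partial_j,W)X,\na_{\partial_i}X)$, using the pair symmetry $g(R(A,B)C,D)=g(R(C,D)A,B)$ of the Riemannian curvature tensor. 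Now the curvature-orthogonality hypothesis says $g(\na_WX,R(Y,Z)W)=0$ identically in $W$; polarizing in $W$ (replacing $W$ by $W+W'$) yields the symmetric-in-$W,W'$ identity $g(\na_WX,R(Y,Z)W')+g(\na_{W'}X,R(Y,Z)W)=0$. Contracting this polarized identity against $g^{ij}$ with $W=\partial_i$, $W'=\partial_j$, and choosing $Y,Z$ suitably, should collapse the trace $g^{ij}g(R(\partial_j,W)X,\na_{\partial_i}X)$ to zero — here one also uses the first Bianchi identity to rearrange $R(\partial_j,W)X$ into combinations of terms of the form $R(Y,Z)\partial_i$ or $R(Y,Z)\partial_j$ so that the polarized orthogonality relation applies.

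I expect the curvature-bookkeeping in this last contraction to be the main obstacle: one must manipulate $g^{ij}g(R(\na_{\partial_i}X,X)\partial_j,W)$ into a sum of terms each matching the left side of the polarized identity $g(\na_WX,R(Y,Z)W')=-g(\na_{W'}X,R(Y,Z)W)$, and this requires a careful, simultaneous use of the pair symmetry $g(R(A,B)C,D)=g(R(C,D)A,B)$, the skew symmetries, and the first Bianchi identity $\cyclic_{A,B,C}R(A,B)C=0$. It may also be cleaner to work in normal coordinates at a point, as in the proof of the preceding theorem, so that $\na_{\partial_i}\partial_j=0$ there and one can differentiate $\dx X^\flat=0$, i.e.\ $g(\na_{\partial_i}X,\partial_j)=g(\na_{\partial_j}X,\partial_i)$, freely. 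Once both of Ishihara's equations are in hand, the Corollary follows directly by invoking Ishihara's theorem, stated just above, which characterizes harmonic maps $X\colon M\rr TM$ by exactly those two equations. (I would also note in passing that this is consistent with Walczak's theorem plus the energy picture: when $M$ is compact one even gets $\na X=0$, but the Corollary as stated does not require compactness.)
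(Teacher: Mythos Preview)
Your extraction of $\bar\Delta X=\trg{g}{\na^2X}=0$ from the closedness condition $\na^2X=0$ is fine, and you correctly identify Ishihara's theorem as the endpoint. But your treatment of the first Ishihara equation takes a needlessly hard road, and you yourself flag the Bianchi/polarization bookkeeping as ``the main obstacle''. The paper sidesteps this obstacle entirely: it uses \emph{only} the condition $\na^2X=0$ --- not the orthogonality condition $g(\na_WX,R(Y,Z)W)=0$ --- to get the curvature equation.

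The point you are missing is the Ricci identity
\[
R(Z,W)X \;=\; \na^2X(Z,W)-\na^2X(W,Z),
\]
so $\na^2X=0$ (the full tensor, not just its trace) forces $R(Z,W)X=0$ for all $Z,W$. Then, by the pair symmetry and skew-adjointness you already invoked,
\[
g\bigl(R(\na_YX,X)Z,W\bigr)\;=\;-\,g\bigl(R(Z,W)X,\na_YX\bigr)\;=\;0,
\]
so in fact $R(\na_YX,X)Z=0$ identically, not merely after tracing. No first Bianchi identity, no polarization, no normal coordinates are required. Your attempted route via the third closedness condition is redundant: the second condition alone already delivers both of Ishihara's equations.
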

\begin{proof}
 We have seen the equations for $(\widetilde{X})^\flat$ being closed imply $\na^2X=0$. Now, since $g(R(\na X,X)Z,W)=-g(R(Z,W)X,\na X)$ and $R(Z,W)X=\na^2X(Z,W)-\na^2X(W,Z)=0$, the result follows.
\end{proof}
\begin{coro}
 Let $X(M)=M^{^X}$ be a totally geodesic submanifold of $TM$. Suppose one of the following conditions is satisfied:
  i)  $R(\cdot,\cdot)X=0$, or ii)  $X$ has constant length. Then $X$ is a harmonic map.
\end{coro}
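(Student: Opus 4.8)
The plan is to reduce each of the two cases to the hypotheses of Ishihara's theorem, namely the pair of equations $\trg{g}{R(\na_\cdot X,X)\cdot}=0$ and $\bar{\Delta}X=\trg{g}{\na^2X}=0$. In both cases the totally-geodesic hypothesis on $M^{^X}$ gives us the ``strange equation'' \eqref{imageofvectorfieldtotallyfeodesic}, tensorial in $Z_1,Z_2$, which we will exploit.

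For case (i), assume $R(\cdot,\cdot)X=0$. As already observed in the excerpt, the symmetry of the Riemannian curvature tensor then also gives $R(X,\cdot)\cdot=0$, so \eqref{imageofvectorfieldtotallyfeodesic} collapses to $\na^2X=0$, i.e. \eqref{nablasquareparallel}. Tracing this yields $\bar{\Delta}X=0$ immediately. For the first Ishihara equation, write $g(R(\na_{Z}X,X)W,V)=-g(R(W,V)X,\na_ZX)$ and note $R(W,V)X=0$ by hypothesis (equivalently, this is exactly the argument used in the first corollary, where $R(Z,W)X=\na^2X(Z,W)-\na^2X(W,Z)=0$; here the vanishing is even more directly at hand). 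Hence $\trg{g}{R(\na_\cdot X,X)\cdot}=0$ as well, and Ishihara's theorem applies.

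For case (ii), assume $g(X,X)$ is constant. This is precisely the hypothesis of the Walczak theorem stated just above, whose proof shows that a totally geodesic $M^{^X}$ together with constant length forces $\na X=0$: contracting \eqref{imageofvectorfieldtotallyfeodesic} with $g(\cdot,X)$ and using $g(\na X,X)=0$ gives $g(\na_{Z_1}\na_{Z_2}X,X)=0$, whence $g(\na_{Z_1}X,\na_{Z_1}X)=0$, i.e. $\na X=0$. A parallel vector field trivially satisfies both Ishihara equations, so $X$ is a harmonic map.

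I do not anticipate a genuine obstacle here, since both cases are corollaries of results already proved in the excerpt: case (i) reuses the computation behind \eqref{nablasquareparallel} and the curvature-symmetry argument from the first corollary, and case (ii) is literally a consequence of Walczak's theorem (which already appears with its proof). The only point requiring a small amount of care is making explicit, in case (i), that $\na^2X=0$ delivers \emph{both} Euler--Lagrange equations and not merely the second one; but this is routine given $R(\cdot,\cdot)X=0$.

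\begin{proof}
 In either case we verify the two conditions of Ishihara's Theorem.

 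Suppose i) holds, that is $R(\cdot,\cdot)X=0$. By the symmetry $g(R(Z,W)U,V)=g(R(U,V)Z,W)$ of the Riemannian curvature tensor, this also gives $R(X,\cdot)\cdot=0$, so equation \eqref{imageofvectorfieldtotallyfeodesic} reduces to \eqref{nablasquareparallel}, i.e. $\na^2X=0$. Tracing, $\bar{\Delta}X=\trg{g}{\na^2X}=0$. Moreover $g(R(\na_ZX,X)W,V)=-g(R(W,V)X,\na_ZX)=0$ for all $Z,W,V$, hence $\trg{g}{R(\na_\cdot X,X)\cdot}=0$. Both equations hold, so $X$ is a harmonic map.

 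Suppose ii) holds, that is $g(X,X)$ is constant. By Walczak's Theorem, since $M^{^X}$ is totally geodesic, we get $\na X=0$. A parallel vector field clearly satisfies $\trg{g}{R(\na_\cdot X,X)\cdot}=0$ and $\bar{\Delta}X=\trg{g}{\na^2X}=0$, so again $X$ is a harmonic map.
\end{proof}
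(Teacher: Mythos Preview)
Your proof is correct and follows essentially the same route as the paper: case (i) via the reduction of \eqref{imageofvectorfieldtotallyfeodesic} to \eqref{nablasquareparallel}, and case (ii) via Walczak's Theorem. The paper's proof is a one-line pointer to those two results, while you spell out explicitly that both Ishihara equations are satisfied in each case; in (i) you use the hypothesis $R(\cdot,\cdot)X=0$ directly for the first equation rather than re-deriving it from $\na^2X=0$, which is if anything a tad cleaner.
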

\begin{proof}
 i) follows directly from the equation deduced in \eqref{nablasquareparallel} and ii) from Walczak's Theorem.
\end{proof}
We also remark that the energy functional has been extended by O.~Gil-Medrano to the space of all immersions $M\lrr TM$, finding the same parallel condition for critical points under compactness. And has been restricted to the space $\XIS^1_M$ of unit vector fields, where harmonic maps in the previous sense are no longer critical for $E$, cf. \cite{Gil-Medrano}.

The whole theory relates to the study of the volume functional $X\mapsto\frac{1}{2}\vol_{X^*\mg}(M)$.

\subsection{Symplectic and mirror vector fields}

We aim for a complete study of all first invariants of the Sasaki metric under all canonical lifts of vector fields on $M$. Recalling the Nagano-Sasaki almost-complex structure $\Jnagasasa=B-B^\mathrm{t}$ and associated symplectic form  $\omega=-\Jnagasasa\lrcorner\mg$, we now consider \textit{symplectic} vector fields $X\in\XIS_{TM}$: such that $\call_X\omega=0$.

Using the torsion-less connection $D^*$ and the fact that $B$ and $B^\mathrm{t}$ are $\na^*$-parallel, we find, for all $Y,Z\in\XIS_{TM}$,
\begin{equation}
\begin{split}
 \call_X\omega(Y,Z) & =\dx(X\lrcorner\omega)(Y,Z)  \\
 & =-\mg(\na^*_Y\Jnagasasa X,Z)+ \mg(\na^*_Z\Jnagasasa X,Y)-\mg(\Jnagasasa X,\calri(Y,Z)) .
\end{split}
\end{equation}

Hence, for $X\in\XIS_M$, the horizontal lift is symplectic if and only if $\na^*X=0$. And the vertical lift is symplectic if and only if $X^\flat$ is closed.

Now we can prove $X$ is Killing if and only if $\widetilde{X}$ is symplectic.
\begin{proof}[Proof of statement (iv) in Theorem \ref{teoremameuedeles}.]
We have, for any $Y,Z\in TTM$,
\begin{equation*}
\begin{split}
& \call_{\widetilde{X}}\omega(Y,Z)= -\mg(\na^*_Y\pi^\estrela X,Z)+\mg(\na^*_Y\na^*_S\pi^*X,Z)+ \\
 & \hspace{27mm} +\mg(\na^*_Z\pi^\estrela X,Y)-\mg(\na^*_Z\na^*_S\pi^*X,Y)-\mg(\pi^*X,\pi^*R(Y,Z)S).
\end{split}
\end{equation*}
Taking $\pi^*Y$ horizontal and $\pi^\estrela Z$ vertical, $Y,Z\in TM$, we obtain, by the remark in page \pageref{remarkveryuseful},
\begin{equation*}
\begin{split}
  \call_{\widetilde{X}}\omega(\pi^*Y,\pi^\estrela Z) & = -\mg(\na^*_{\pi^*Y}\pi^\estrela X,\pi^\estrela Z)-\mg(\na^*_{\pi^\estrela Z}\na^*_S\pi^*X,\pi^*Y)  \\
  & = -g(\na_YX,Z)-\mg(\na^*_{\pi^*Z}\pi^*X,\pi^*Y)  \\
  & = -g(\na_YX,Z)-g(\na_ZX,Y) .
\end{split}
\end{equation*}
This proves symplectic implies Killing. The converse comes from $\call_X\dx\theta=\dx\call_X\theta$. Or we can check that the equation above for the symplectic extension vanishes for $Y,Z$ vertical, clearly, and for $Y,Z$ horizontal also. In the latter case, we must recur to the infinitesimal affine transformation equation, which holds by hypothesis, and then to Bianchi identity.
\end{proof}

Now we shall define\footnote{There are many natural non-trivial endomorphisms of $TTM$, such as $B,B^\mathrm{t},(\cdot)^h,(\cdot)^v$, so we could treat their linear combinations; but we have some special interest in discussing the present simple notion.} a vector field $X$ on $TM$ to be a $\lambda$-\textit{mirror} (respectively $\lambda$-\textit{adjoint-mirror}) vector field if $\call_XB=\lambda B$ (resp. $\call_XB^\mathrm{t}=\lambda B^\mathrm{t}$) for some constant $\lambda\in\R$.

Two interesting Lie subalgebras of vector fields are clearly defined for $\lambda=0$.

The solution space $\XIS_{TM}^\lambda$ of $\lambda$-mirror vector fields is obviously given by a particular solution $X_\lambda$ plus $\XIS_{TM}^0$. The same holds in the adjoint.

We have immediately, for all $X,Y,\in\XIS_{TM}$,
\begin{equation} \label{Liederivativemirror}
 \call_XB (Y)=[X,BY]-B[X,Y]=-\na^*_{BY}X+\na^*_YBX .
\end{equation}

$\xi$ is an example of a $-1$-mirror vector field. Every vertical lift $\pi^\estrela X$ is a 0-mirror.

For a horizontal lift $\pi^*X$, we obtain $\call_{\pi^*X}B (Y)=\na^*_Y\pi^\estrela X$. For $Z=\na^*_S\pi^\estrela X$, we obtain $\call_ZB (Y)=-\na^*_{BY}\na^*_S\pi^\estrela X=-\na^*_Y\pi^\estrela X$. Combining these two details, we find that every extension is a 0-mirror.
\begin{prop}
For any $X\in\XIS_M$, we have ${\widetilde{X}}\in\XIS_{TM}^0$, i.e.
\begin{equation}\label{Liederivativemirrorextendedvectorfield}
 \call_{\widetilde{X}} B =0.
\end{equation}
\end{prop}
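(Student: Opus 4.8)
The plan is to use the global expression $\widetilde{X}=\pi^*X+\na^*_{S}\pi^\estrela X$ from \eqref{extensionvfie}, available because the Levi-Civita connection is torsion-free, and then to exploit that the assignment $W\mapsto\call_{W}B$ is additive in $W$, so that $\call_{\widetilde{X}}B=\call_{\pi^*X}B+\call_{\na^*_{S}\pi^\estrela X}B$. It is then enough to evaluate each summand on an arbitrary $Y\in\XIS_{TM}$ by means of \eqref{Liederivativemirror}, in the form $\call_{W}B(Y)=-\na^*_{BY}W+\na^*_YBW$, and to check that the two contributions cancel. I would use throughout that $\na^*=\pi^*\na\oplus\pi^\estrela\na$ respects $TTM=H\oplus V$, that $\na^*B=0$, that $BY=\pi^\estrela(\dx\pi\,Y)$ is always vertical, and that $\na^*_U\pi^*X=\pi^*(\na_{\dx\pi(U)}X)$ and $\na^*_U\pi^\estrela X=\pi^\estrela(\na_{\dx\pi(U)}X)$.

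For the first summand, since $B\pi^*X=\pi^\estrela X$, formula \eqref{Liederivativemirror} gives $\call_{\pi^*X}B(Y)=-\na^*_{BY}\pi^*X+\na^*_Y\pi^\estrela X$. As $BY$ is vertical and $\dx\pi(BY)=0$, the term $\na^*_{BY}\pi^*X=\pi^*(\na_{\dx\pi(BY)}X)$ vanishes, leaving $\call_{\pi^*X}B(Y)=\na^*_Y\pi^\estrela X$.

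For the second summand, with $W=\na^*_{S}\pi^\estrela X$, formula \eqref{Liederivativemirror} gives $\call_{W}B(Y)=-\na^*_{BY}\na^*_{S}\pi^\estrela X+\na^*_Y\bigl(B\na^*_{S}\pi^\estrela X\bigr)$; the last term is zero because $B\na^*_{S}\pi^\estrela X=\na^*_{S}(B\pi^\estrela X)=0$, using $\na^*B=0$ and that $B$ annihilates vertical lifts. In the remaining term, $BY=\pi^\estrela(\dx\pi\,Y)$ is vertical, so the Remark on page \pageref{remarkveryuseful} applies and yields $\na^*_{BY}\na^*_{S}\pi^\estrela X=\pi^\estrela(\na_{\dx\pi Y}X)$, which is exactly $\na^*_Y\pi^\estrela X$. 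Hence $\call_{W}B(Y)=-\na^*_Y\pi^\estrela X$, and adding the two pieces gives $\call_{\widetilde{X}}B(Y)=0$ for every $Y$, that is, \eqref{Liederivativemirrorextendedvectorfield}.

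I do not expect a real obstacle here: this is a direct consequence of the two computations carried out in the text just before the statement, and the only care required is the bookkeeping with the splitting $H\oplus V$ and the identification $BY=\pi^\estrela(\dx\pi\,Y)$ --- it is precisely this identification that makes the horizontal contribution $\na^*_Y\pi^\estrela X$ cancel the one produced by $\na^*_{S}\pi^\estrela X$. Equivalently, one may simply invoke the already established values $\call_{\pi^*X}B(Y)=\na^*_Y\pi^\estrela X$ and $\call_{\na^*_{S}\pi^\estrela X}B(Y)=-\na^*_Y\pi^\estrela X$ and add them.
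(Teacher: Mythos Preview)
Your proposal is correct and follows exactly the approach the paper takes: the two sentences immediately preceding the proposition compute $\call_{\pi^*X}B(Y)=\na^*_Y\pi^\estrela X$ and $\call_{\na^*_S\pi^\estrela X}B(Y)=-\na^*_{BY}\na^*_S\pi^\estrela X=-\na^*_Y\pi^\estrela X$, and the proposition is simply their sum. Your write-up merely makes explicit the bookkeeping (that $BY$ is vertical, that $\na^*B=0$, and the use of the Remark on page~\pageref{remarkveryuseful}) which the paper leaves implicit.
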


Here follows the study of Euclidean space.
\begin{prop}
 Every $\lambda$-mirror vector field $X$ of $T\R^m$ decomposes uniquely as the sum of three fields: a vertical lift vector field; an extension vector field; and the horizontal lift $\lambda\pi^*P$ where $P$ is the position vector field $P=x^i\partial_i$ on $\R^m$. 
\end{prop}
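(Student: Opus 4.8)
The strategy is to work in global coordinates on $\R^m$, where all Christoffel symbols vanish, so that $\na^* = \pi^*\na \oplus \pi^\estrela\na$ reduces to the flat connection and, crucially, the curvature term $\calri$ is identically zero. Consequently $B$ is $\na^*$-parallel and, by \eqref{Liederivativemirror}, for any vector field $X$ on $T\R^m$ and any $Y \in \XIS_{T\R^m}$ we have $\call_X B(Y) = -\na^*_{BY}X + \na^*_Y BX$. First I would expand an arbitrary $X \in \XIS_{T\R^m}$ in the global frame $\{\pi^*\partial_i, \pi^\estrela\partial_i\}$ as $X = a^i \pi^*\partial_i + b^i \pi^\estrela\partial_i$ with $a^i, b^i$ smooth functions of $(x,v)$, and then impose $\call_X B = \lambda B$ by testing against $Y = \pi^*\partial_j$ and $Y = \pi^\estrela\partial_j$. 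Since $B\pi^*\partial_j = \pi^\estrela\partial_j$ and $B\pi^\estrela\partial_j = 0$, these two families of equations will split into a horizontal system and a vertical system on the coefficient functions.

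The key computation is therefore: testing against $\pi^\estrela\partial_j$ gives $\call_X B(\pi^\estrela\partial_j) = \na^*_{\pi^\estrela\partial_j} BX$; setting this equal to $\lambda B\pi^\estrela\partial_j = 0$ forces $BX$ to be constant along the fibres, i.e. $\partial_{v^j}(a^i) = 0$, so $a^i = a^i(x)$ depends only on the base point. Testing against $\pi^*\partial_j$ gives $\call_X B(\pi^*\partial_j) = -\na^*_{\pi^\estrela\partial_j} X + \na^*_{\pi^*\partial_j} BX = \lambda \pi^\estrela\partial_j$. The horizontal part of this equation reads $-\partial_{v^j} a^i = 0$ (already known) and the vertical part reads $-\partial_{v^j} b^i + \partial_{x^j} a^i = \lambda \delta^i_j$. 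Combining with $a^i = a^i(x)$, the first relation shows $b^i$ is at most linear in $v$; integrating, $b^i(x,v) = v^j\bigl(\partial_{x^j}a^i(x) - \lambda\delta^i_j\bigr) + c^i(x) = v^j\partial_{x^j}a^i - \lambda v^i + c^i(x)$ for some smooth $c^i$ on $\R^m$.

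Now I would recognise the three pieces of this decomposition against the canonical lifts: the field $c^i(x)\pi^\estrela\partial_i$ is exactly the vertical lift $\pi^\estrela C$ of the vector field $C = c^i\partial_i \in \XIS_{\R^m}$; the field $a^i(x)\pi^*\partial_i + v^j\partial_{x^j}a^i \,\pi^\estrela\partial_i$ is, comparing with \eqref{extensionvfi}, precisely the extension $\widetilde{A}$ of $A = a^i\partial_i \in \XIS_{\R^m}$ (here $\pi^*\partial_i = \partial_i$ since the connection is flat); and $-\lambda v^i \pi^\estrela\partial_i$ needs to be rewritten. Using that $\call_{\pi^*P}B(Y) = \na^*_Y \pi^\estrela P = \na^*_Y \pi^\estrela(x^i\partial_i)$, and that the position field $P = x^i\partial_i$ has $\na P = \mathrm{Id}$, one computes $\call_{\pi^*P} B = B$ (a $1$-mirror field), and more to the point the horizontal lift $\lambda\pi^*P$ contributes, in the frame, the term $\lambda x^i \pi^*\partial_i + \lambda v^j \na^*_{\pi^\estrela\partial_j}(\text{nothing})\ldots$; in fact the cleanest route is to note $\lambda\pi^*P = \lambda x^i\pi^*\partial_i$ and then absorb the $-\lambda v^i$ correction by checking that $\widetilde{A} + \pi^\estrela C + \lambda\pi^*P$ has the right coefficient functions. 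The bookkeeping obstacle — the main place care is needed — is matching $-\lambda v^i \pi^\estrela\partial_i$ correctly: one verifies directly that $\lambda\pi^*P$, being $\lambda\pi^*(x^i\partial_i)$, satisfies $\call_{\lambda\pi^*P}B = \lambda B$ so it is a particular $\lambda$-mirror field, and then by the linearity of $\XIS_{T\R^m}^\lambda = \lambda\pi^*P + \XIS_{T\R^m}^0$ and Proposition \ref{Liederivativemirrorextendedvectorfield} (extensions are $0$-mirror) together with the fact (noted in the text) that vertical lifts are $0$-mirror, the difference $X - \lambda\pi^*P$ lies in $\XIS_{T\R^m}^0$; rerunning the coefficient analysis with $\lambda = 0$ shows $a^i = a^i(x)$ and $b^i = v^j\partial_{x^j}a^i + c^i(x)$, i.e. $X - \lambda\pi^*P = \widetilde{A} + \pi^\estrela C$, giving the asserted decomposition. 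Uniqueness is immediate: vertical lifts, extensions, and $\lambda\pi^*P$ are visibly linearly independent in $\XIS_{T\R^m}$ (compare their coefficient functions — the vertical lift contributes a $v$-independent vertical term, the extension a specific $v$-linear vertical term plus an $x$-dependent horizontal term, and $\lambda\pi^*P$ the remaining $x$-linear horizontal term), and the reconstruction above pins down $a^i$, $b^i$ uniquely.
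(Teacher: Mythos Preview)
Your proof is correct and follows essentially the same route as the paper: expand $X$ in the global flat frame, test $\call_XB=\lambda B$ against $\pi^*\partial_j$ and $\pi^\estrela\partial_j$, deduce that the horizontal coefficients $a^i$ depend only on $x$ and that the vertical coefficients $b^i$ are affine in $v$ with slope $\partial_{x^j}a^i-\lambda\delta^i_j$, and then read off the three pieces. The paper does exactly this computation (with $A_1^i,A_2^j$ in place of your $a^i,b^i$).

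Two minor presentation points. First, your detour through the term $-\lambda v^i\pi^\estrela\partial_i$ is unnecessary; you can go directly to the claimed decomposition by taking the extension field to be $\widetilde{(A-\lambda P)}$ rather than $\widetilde{A}$, since $\widetilde{P}=\pi^*P+\xi$ absorbs the stray $-\lambda\xi$. Your subtraction argument (checking $\lambda\pi^*P$ is $\lambda$-mirror and rerunning the $\lambda=0$ analysis on $X-\lambda\pi^*P$) is a valid workaround, but note that the ``$A$'' you obtain in that second pass is $A-\lambda P$, not the original $A$; you should flag the change of notation. Second, your uniqueness sketch is slightly off: $\lambda\pi^*P$ is not linearly independent of extensions at the level of horizontal parts (its horizontal part $\lambda x^i$ could be absorbed). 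Uniqueness holds because, once $\lambda\pi^*P$ is fixed, the horizontal part of $X-\lambda\pi^*P$ determines the extension field $\widetilde{W}$ completely (via $W^i=a^i-\lambda x^i$), and then the $v$-independent vertical remainder determines the vertical lift.
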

\begin{proof}
We use coordinates $(x^i,v^i)$ and let $X=(A_1,A_2)=A_1^i\partial_i+A_2^j\partial_{v^j}$ be tangent to the tangent manifold, with obvious decomposition. In particular, $BX=A_1^i\partial_{v^i},\ B\partial_i=\partial_{v^i},\ B\partial_{v^j}=0$. For $\partial_{v^j}$, we get $\call_XB(\partial_{v^j})=\partial_{v^j}A_1^i\partial_{v^i}$ equal to $\lambda B\partial_{v^j}=0$ if and only if $\partial_{v^j}A^i_1=0$, for all $1\leq i,j\leq m$. For $\partial_i$, we get $\call_XB(\partial_i)=-\partial_{v^i}(A_1,A_2)+\partial_iA_1^j\partial_{v^j}$. This equals $\lambda B\partial_i=\lambda\partial_{v^i}$ if and only if $\partial_{v^i}A_1^j=0$ and $-\partial_{v^i}A_2^j+\partial_{i}A_1^j=\lambda\delta_i^j$. Now, differentiating this last equation under $\partial_{v^k}$, yields $\partial^2_{v^k,v^i}A_2^j=0$ and so we may write $A_2^j=A_{20}^j+A_{2i}^jv^i$ with $A_{20}^j,A_{2i}^j$ functions of $x$ only. Therefore, $\partial_iA_1^j=\lambda\delta_i^j+A_{2i}^j$. In sum, $A_1^j$ is a function of $x$ only and $A_2^j=A_{20}^j+(\papa{A_1^j}{x^i}-\lambda\delta_i^j)v^i$. Clearly, $A_{20}^j\partial_{v^j}$ is the desired vertical lift, $A_1^j\partial_j+\papa{A_1^j}{x^i}v^i\partial_{v^j}$ is an extension vector field and $\lambda x^i\partial_i$ is the particular solution of the $\lambda$-mirror equation.
\end{proof}

Now let us see the case of the adjoint-mirror map, in the general picture. First, it is easy to see, for all $X,Y,\in\XIS_{TM}$,
\begin{equation}\label{Liederivativeadjointmirror}
 \call_XB^\mathrm{t}(Y)=-\na^*_{B^\mathrm{t}Y}X+\na^*_YB^\mathrm{t}X +\pi^*R(X,Y)S - \calri(X,B^\mathrm{t}Y)   .
\end{equation}
\begin{prop}
 If $X\in\XIS_{TM}$ is 0-adjoint-mirror, then it is 0-mirror.
\end{prop}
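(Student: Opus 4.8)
The plan is to reduce the identity $\call_X B=0$ to a short list of tensor equations, obtained by evaluating the closed formulas \eqref{Liederivativemirror} and \eqref{Liederivativeadjointmirror} on horizontal and vertical lifts, and then to deduce each of these from the equations that the hypothesis $\call_X B^{\mathrm{t}}=0$ produces in the same way. Write $X=X^h+X^v$ for the decomposition of $X$ along $TTM=H\oplus V$. Since $\na^*B=\na^*B^{\mathrm{t}}=0$, and since $B^{\mathrm{t}}$ (respectively $B$) annihilates horizontal (respectively vertical) vectors while $\na^*$ preserves the splitting, one has
\[ \na^*_Z BX=B\,\na^*_Z X^h ,\qquad \na^*_Z B^{\mathrm{t}}X=B^{\mathrm{t}}\na^*_Z X^v ,\qquad \forall\, Z\in\XIS_{TM}, \]
which is what makes the following bookkeeping manageable; I would also use $B^{\mathrm{t}}B=(\cdot)^h$, $BB^{\mathrm{t}}=(\cdot)^v$, and $B^{\mathrm{t}}\calri(X^h,\pi^*W)=\pi^*R(X,\pi^*W)S$.

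First I would unpack the hypothesis. Evaluating \eqref{Liederivativeadjointmirror} at $Y=\pi^*W$ and at $Y=\pi^\estrela W$ suffices, both sides being tensorial and the lifts spanning; using $B^{\mathrm{t}}\pi^*W=0$, $B^{\mathrm{t}}\pi^\estrela W=\pi^*W$, $\pi^*R(X,\pi^\estrela W)S=0$, and separating the $H$- and $V$-components, $\call_X B^{\mathrm{t}}=0$ becomes equivalent to
\[ \na^*_{\pi^*W}X^h=B^{\mathrm{t}}\na^*_{\pi^\estrela W}X^v ,\qquad \na^*_{\pi^*W}X^v=-\calri(X^h,\pi^*W) ,\qquad\forall\, W\in\XIS_M , \]
the further relation obtained from $Y=\pi^*W$ being $B^{\mathrm{t}}\bigl(\na^*_{\pi^*W}X^v+\calri(X^h,\pi^*W)\bigr)=0$, which is the second equation once more.

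Next I would record what must be shown. Evaluating \eqref{Liederivativemirror} at $\pi^*W$ and at $\pi^\estrela W$, and again splitting into components, $\call_X B=0$ is equivalent to
\[ \na^*_{\pi^\estrela W}X^h=0 ,\qquad \na^*_{\pi^\estrela W}X^v=B\,\na^*_{\pi^*W}X^h ,\qquad\forall\, W\in\XIS_M . \]
The second of these is free: apply $B$ to the first equation of the hypothesis and use $BB^{\mathrm{t}}=(\cdot)^v$ on the vertical vector $\na^*_{\pi^\estrela W}X^v$. Hence everything reduces to proving $\na^*_{\pi^\estrela W}X^h=0$ for all $W$, that is, that the flow of $X$ preserves the vertical distribution $V$.

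This last point is the crux, and the step I expect to be the genuine obstacle, since neither equation of the hypothesis involves $\na^*_{\pi^\estrela W}X^h$. My plan would be to differentiate the hypothesis in a vertical direction: applying $\na^*_{\pi^\estrela V}$ to both equations and using $R^{\na^*}(\pi^\estrela V,\pi^*W)=0$, the bracket $[\pi^\estrela V,\pi^*W]=-\pi^\estrela(\na_W V)$, the vanishing of the vertical derivative of the pulled-back curvature, and the already-proved relation $\na^*_{\pi^\estrela U}X^v=B\,\na^*_{\pi^*U}X^h$, one rewrites the results as identities among the second covariant derivatives of $X^h$, the operators $\calri\bigl(\na^*_{\pi^\estrela V}X^h,\pi^*W\bigr)$, and the curvature of $M$; the first Bianchi identity should then cancel the honest curvature terms. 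Whether the surviving relation forces $\na^*_{\pi^\estrela W}X^h=0$, or only a weaker curvature-degenerate condition, is exactly the point on which the argument turns, and settling it is the real content of the proof. If it does not close cleanly this way, I would instead argue pointwise in normal coordinates, in the spirit of the proof of the Theorem of Sasaki on $1$-forms and of the Euclidean Proposition above, by writing $X^h$ and $X^v$ in a chart and showing that the chart components of $X^h$ must be independent of the fibre coordinates.
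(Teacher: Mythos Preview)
Your reduction is correct and mirrors the paper's approach: both split the $0$-mirror condition into what you call (C1) and (C2), and both derive (C2) from (H1) by applying $B$. You are right that (C1) --- the vanishing of $\na^*_{\pi^\estrela W}X^h$ --- is the crux, and that neither (H1) nor (H2) touches it.

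The gap is genuine and in fact unfillable: the proposition as stated is false. On $T\R^m$ the geodesic spray $S=v^i\partial_{x^i}$ satisfies $\call_SB^{\mathrm t}=0$ (directly: $[S,\partial_{x^j}]=0$ and $B^{\mathrm t}[S,\partial_{v^j}]=B^{\mathrm t}(-\partial_{x^j})=0$; or via the paper's own Euclidean characterisation $\partial_iA_2^j=0$, $\partial_iA_1^j=\partial_{v^i}A_2^j$, with $A_1^i=v^i$, $A_2^j=0$), yet $\call_SB(\partial_{x^j})=[S,\partial_{v^j}]-B[S,\partial_{x^j}]=-\partial_{x^j}\neq0$. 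So $S$ is $0$-adjoint-mirror but not $0$-mirror, and the differentiation scheme you sketch cannot close. The paper's own proof has the same gap: it obtains $-B\na^*_{Y^h}X^h+\na^*_{BY^h}X^v=0$ from the hypothesis and calls this ``the unique defining equation of $0$-mirror vector field'', but that is only the vertical component of $\call_XB(Y)=0$ at \emph{horizontal} $Y$ --- i.e.\ your (C2). The same component at \emph{vertical} $Y$ reads $B\na^*_{Y^v}X^h=0$, i.e.\ (C1), and is never established there either.
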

\begin{proof}
 The horizontal and vertical parts of $\call_XB=0$ are $\na^*_{Y^v}X^h=0$ and $-\na^*_{BY}X^v+B\na^*_YX^h=0$. Using $Y$ vertical, the latter implies the former equation. Now let us take $Y$ vertical in $\call_XB^\mathrm{t}=0$, knowing \eqref{Liederivativeadjointmirror}, and read the horizontal part. It says $-\na^*_{B^\mathrm{t}Y^v}X^h+B^\mathrm{t}\na^*_{Y^v}X^v=0$. Applying $B$ on the left and calling $B^\mathrm{t}Y=Y^h$, we obtain $-B\na^*_{Y^h}X^h+\na^*_{BY}X^v=0$, which is the unique defining equation of 0-mirror vector field.
\end{proof}

\begin{prop}
 For $X\in\XIS_M$, both the horizontal lift \emph{or} the vertical lift are 0-adjoint-mirror if and only if $X$ is parallel.
 
 The extension $\widetilde{X}$ is 0-adjoint-mirror if and only if $X$ is an infinitesimal affine transformation.
\end{prop}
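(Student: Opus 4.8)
The plan is to handle the three lifts one at a time, in each case reading the horizontal and vertical parts of $\call_{\cdot}B^\mathrm{t}$ off the intrinsic formula \eqref{Liederivativeadjointmirror}, and to shorten the argument by quoting what is already available: that a $0$-adjoint-mirror field is $0$-mirror, that $\pi^*X$ is $0$-mirror exactly when $\na X=0$ (since $\call_{\pi^*X}B(Y)=\na^*_Y\pi^\estrela X$), that every vertical lift is $0$-mirror, that $\call_{\widetilde X}B=0$ always by \eqref{Liederivativemirrorextendedvectorfield}, and that $\widetilde X$ is almost-analytic precisely when $X$ satisfies the infinitesimal affine transformation equation \eqref{infinitesimalaffinetransformation}.

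For the horizontal lift the ``only if'' direction is free: being $0$-adjoint-mirror it is $0$-mirror, and $\pi^*X$ $0$-mirror forces $\na X=0$. For the vertical lift I would go straight to \eqref{Liederivativeadjointmirror}. Since $\pi^\estrela X$ has vanishing horizontal component, the curvature summands $\pi^*R(\pi^\estrela X,\cdot)S$ and $\calri(\pi^\estrela X,\cdot)$ drop out, while $B^\mathrm{t}\pi^\estrela X=\pi^*X$; this leaves $\call_{\pi^\estrela X}B^\mathrm{t}(Y)=-\na^*_{B^\mathrm{t}Y}\pi^\estrela X+\na^*_Y\pi^*X$, which on a horizontal $Y=\pi^*Y_0$ equals $\na^*_{\pi^*Y_0}\pi^*X=\pi^*(\na_{Y_0}X)$. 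Vanishing for all $Y_0$ gives $\na X=0$. Conversely, when $\na X=0$ both $\pi^*X$ and $\pi^\estrela X$ are $\na^*$-parallel in every direction, one has $R(\cdot,\cdot)X=0$, hence also $R(X,\cdot)\cdot=0$ by the pair symmetry $g(R(X,Y)Z,W)=g(R(Z,W)X,Y)$, and so each of the four terms of \eqref{Liederivativeadjointmirror} vanishes for $\pi^*X$ and for $\pi^\estrela X$. This proves the first assertion.

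For the extension the cleanest route skips a second computation. Because $\Jnagasasa=B-B^\mathrm{t}$ and $\call$ is $\R$-linear in its tensor argument, $\call_{\widetilde X}B^\mathrm{t}=\call_{\widetilde X}B-\call_{\widetilde X}\Jnagasasa=-\call_{\widetilde X}\Jnagasasa$ by \eqref{Liederivativemirrorextendedvectorfield}. But $\call_{\widetilde X}\Jnagasasa=0$ is exactly the statement that $\widetilde X$ is almost-analytic, which, as recalled above (cf. \cite[Theorem 2]{OkuTacha}), holds if and only if $X$ is an infinitesimal affine transformation. Hence $\call_{\widetilde X}B^\mathrm{t}=0$ if and only if $X$ is an infinitesimal affine transformation. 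One may instead expand $\call_{\widetilde X}B^\mathrm{t}$ from \eqref{Liederivativeadjointmirror} with $\widetilde X=\pi^*X+\na^*_S\pi^\estrela X$ and simplify the horizontal and vertical parts with Remark~\ref{remarkveryuseful}; both collapse to $\na^2X(Y,Z)+R(X,Y)Z=0$.

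The only thing to watch is bookkeeping: which of the four summands in \eqref{Liederivativeadjointmirror} survive on a horizontal versus a vertical argument, and that the curvature summands there see only horizontal components. There is no real obstacle; if one declines the shortcut for the extension, the one mildly fiddly step is the reduction $\na^*_{\pi^*\partial_i}\na^*_S\pi^\estrela X=v^b\pi^\estrela\na^2X(\partial_i,\partial_b)$ together with its horizontal analogue, which is precisely what Remark~\ref{remarkveryuseful} supplies.
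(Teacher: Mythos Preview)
Your argument is correct. For the first assertion the paper simply declares the horizontal and vertical cases ``less trivial'' and omits them, so your treatment supplies what is missing; the use of the earlier proposition (that $0$-adjoint-mirror implies $0$-mirror) to dispatch the ``only if'' direction for $\pi^*X$ is a nice economy.

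For the extension your route is genuinely different from the paper's. The paper plugs $\widetilde X=\pi^*X+\na^*_S\pi^\estrela X$ into \eqref{Liederivativeadjointmirror} and computes separately on horizontal and vertical $Y$, reducing each case to $\na^2X(\cdot,S)+R(X,\cdot)S=0$ via Remark~\ref{remarkveryuseful}. You instead exploit the identity $\call_{\widetilde X}B^\mathrm{t}=\call_{\widetilde X}B-\call_{\widetilde X}\Jnagasasa=-\call_{\widetilde X}\Jnagasasa$ coming from \eqref{Liederivativemirrorextendedvectorfield} and then invoke the known characterisation of almost-analytic extensions. This is shorter and conceptually cleaner; the paper in fact records this equivalence \emph{after} its proof as a remark, whereas you use it as the proof itself. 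What the paper's direct computation buys is self-containment---it does not rely on the Okumura--Tachibana result---while your shortcut makes the logical dependence transparent and avoids any bookkeeping.
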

\begin{proof}
 We just see the less trivial case of $\widetilde{X}$. For $Y$ horizontal, the vanishing of \eqref{Liederivativeadjointmirror} becomes the vanishing of $\na^*_Y\na^*_S\pi^*X+\pi^*R(\pi^*X,Y)S=\pi^*\na^2X(Y,S)+\pi^*R(X,Y)S$. For $Y$ vertical, we find $-\na^*_{B^\mathrm{t}Y}\pi^*X-\na^*_{B^\mathrm{t}Y}\na^*_S\pi^\estrela X+\na^*_Y\na^*_S\pi^*X-\calri(X,B^\mathrm{t}Y)=-\pi^\estrela\na^2X(B^\mathrm{t}Y,S)-\calri(X,B^\mathrm{t}Y)$, which repeats the infinitesimal affine transformation equation.
\end{proof}
Of course, knowing \eqref{Liederivativemirrorextendedvectorfield}, we have just seen the proof that an extended vector field $\widetilde{X}$ being 0-adjoint-mirror, almost-analytic or an infinitesimal affine transformation is the same.

\begin{prop}
 A vector field $X=A_1^i\partial_i+A_2^j\partial_{v^j}$ tangent to $T\R^m$ is 0-adjoint-mirror if and only if $\partial_iA_2^j=0,\ \partial_iA_1^j=\partial_{v^i}A_2^j$, for all $1\leq i,j\leq m$.
 
 An extension vector field $\widetilde{X}=A^j\partial_j+v^k\partial_kA^i\partial_{v^i}$ is 0-adjoint-mirror if and only if $\partial^2_{j,k}A^i=0$, for all $1\leq i,j,k\leq m$, this is, if and only if $A^i(x)=a_0^i+a_j^ix^j$ with constants $a_0^i,a_j^i$, for all $1\leq i\leq m$.
\end{prop}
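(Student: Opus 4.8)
The plan is to specialise the general formula \eqref{Liederivativeadjointmirror} to $M=\R^m$ with its standard flat connection, where $\Gamma^k_{ij}=0$ and $R^\na=0$, hence also $\calri=0$ and $\pi^*R(X,Y)S=0$. On $T\R^m$ the connection $\na^*$ is just coordinate-wise differentiation in $(x^i,v^i)$, so the identity collapses to
\[
 \call_XB^\mathrm{t}(Y)=-\na^*_{B^\mathrm{t}Y}X+\na^*_YB^\mathrm{t}X .
\]
The first concrete step is to read off $B^\mathrm{t}$ in the frame $\partial_i,\partial_{v^i}$: from $B\partial_i=\partial_{v^i}$, $B\partial_{v^i}=0$ and the block form of the Sasaki metric one gets $B^\mathrm{t}\partial_i=0$ and $B^\mathrm{t}\partial_{v^i}=\partial_i$, so for $X=A_1^i\partial_i+A_2^j\partial_{v^j}$ one has $B^\mathrm{t}X=A_2^j\partial_j$.

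Since $\call_XB^\mathrm{t}$ is tensorial in its argument, it then suffices to evaluate the displayed identity on $\partial_i$ and on $\partial_{v^i}$. Taking $Y=\partial_i$ gives $\call_XB^\mathrm{t}(\partial_i)=\na^*_{\partial_i}(A_2^j\partial_j)=(\partial_iA_2^j)\partial_j$, which vanishes exactly when $\partial_iA_2^j=0$. Taking $Y=\partial_{v^i}$ gives
\[
 \call_XB^\mathrm{t}(\partial_{v^i})=-\na^*_{\partial_i}X+\na^*_{\partial_{v^i}}(A_2^j\partial_j)=\bigl(\partial_{v^i}A_2^j-\partial_iA_1^j\bigr)\partial_j-(\partial_iA_2^j)\partial_{v^j},
\]
whose vertical part repeats $\partial_iA_2^j=0$ and whose horizontal part is $\partial_iA_1^j=\partial_{v^i}A_2^j$. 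These are precisely the two asserted conditions, which settles the first statement.

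For the second statement I would insert the data of an extension, $A_1^j=A^j(x)$ and $A_2^i=v^k\partial_kA^i$, into these two conditions. The equation $\partial_iA_1^j=\partial_{v^i}A_2^j$ becomes the tautology $\partial_iA^j=\partial_iA^j$ — as it must, since $\call_{\widetilde X}B=0$ always, cf.\ \eqref{Liederivativemirrorextendedvectorfield}, and a $0$-adjoint-mirror field is $0$-mirror by the preceding proposition. The equation $\partial_iA_2^j=0$ becomes $v^k\,\partial^2_{i,k}A^j=0$ for every $v\in\R^m$, i.e.\ $\partial^2_{j,k}A^i=0$ for all indices, which is exactly the condition that each $A^i$ be an affine function $a_0^i+a_j^ix^j$. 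I do not expect any real obstacle here: the argument is entirely the coordinate identification of $B^\mathrm{t}$ together with index bookkeeping, the only point deserving a word of care being the polynomial-in-$v$ step that passes from $v^k\partial^2_{i,k}A^j=0$ to $\partial^2_{i,k}A^j=0$.
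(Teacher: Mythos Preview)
Your argument is correct. The paper gives no explicit proof for this proposition --- it is the last statement before the bibliography and is left without a \texttt{proof} environment, presumably because it is a direct coordinate computation in the same spirit as the preceding Proposition on $\lambda$-mirror fields on $T\R^m$. Your approach is exactly the natural one: specialise \eqref{Liederivativeadjointmirror} to the flat case, identify $B^\mathrm{t}$ in the coordinate frame, and test on $\partial_i$ and $\partial_{v^i}$. The computations are accurate, including the observation that the second condition becomes tautological for an extension (consistent with \eqref{Liederivativemirrorextendedvectorfield}) and the passage from $v^k\partial^2_{i,k}A^j=0$ for all $v$ to $\partial^2_{i,k}A^j=0$.
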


\bigskip

\medskip
\medskip

\ 

\textsc{R. Albuquerque}\ \ \ \ \textbar\ \ \ \ 
{\texttt{rpa@uevora.pt}}

Centro de Investiga\c c\~ao em Mate\-m\'a\-ti\-ca e Aplica\c c\~oes

Rua Rom\~ao Ramalho, 59, 671-7000 \'Evora, Portugal

The research leading to these results has received funding from Funda\c c\~ao para a Ci\^encia e a Tecnologia.


\begin{thebibliography}{30}



\bibitem{AbbCalvaPerrone}
M.T.K.~Abbassi, G.~Calvaruso and D.~Perrone,
{\em Harmonic sections of tangent bundles equipped with Riemannian $g$-natural metrics},
Quart. J. Math., Vol. 62, I. 2 (2011), 259--288.


\bibitem{AbbYamp}
M.T.K.~Abbassi and A.~Yampolsky,
{\em Transverse totally geodesic submanifolds of the tangent bundle},
Publ. Math. Debrecen 64, 1-2 (2004), 129--154.



\bibitem{Alb2011}
R.~Albuquerque,
{\em Curvatures of weighted metrics on tangent sphere bundles},
Riv. Mat. Univ. Parma, Vol. 2, (2) (2011), 299--313.



\bibitem{Alb2014a}
R.~Albuquerque,
{\em Homotheties and topology of tangent sphere bundles},
Jour. of Geometry, 105, 2 (2014), 327--342.



\bibitem{Alb2014d}
R.~Albuquerque,
{\em On vector bundle manifolds with spherically symmetric metrics},
Ann. Global Anal. Geom.,  vol. 51, Issue 2 (2017), 129--154.



\bibitem{Alb2018}
R.~Albuquerque, 
{\em The ciconia metric on the tangent bundle of an almost-Hermitian manifold}, August 2016, \url{https://arxiv.org/abs/1612.07596}


\bibitem{Alb2019}
R.~Albuquerque,
{\em A fundamental differential system of Riemannian geometry},
to appear in Rev. Iberoamericana de Matem\'atica, Issue 35.6, 2019.


\bibitem{Calva}
G.~Calvaruso,
{\em Naturally Harmonic Vector Fields},
Note Mat. 1 (2008), suppl. n. 1, 107--130.


\bibitem{CenGezSal}
N.~Cengiz, A.~Gezer and A.A.~Salimov,
{\em On integrability of Golden Riemannian structures},
Turkish J. of Math., 37 (2013), 693--703.


\bibitem{Gil-Medrano}
O.~Gil-Medrano,
{\em Relationship between volume and energy of vector fields},
Diff. Geom. Appl. 15 (2001), 137--152.



\bibitem{HedBida}
S.~Hedayatian and B.~Bidabad,
{\em Conformal vector fields on tangent bundle of a Riemannian manifold},
Iranian J. of Science \& Tech., Transaction A, Vol. 29, N. A3 (2005).



\bibitem{KobNomi}
S.~Kobayashi and K.~Nomizu,
\emph{Foundations of Differential Geometry},
Wiley Clas. Lib., Vol. 1, 1963.



\bibitem{KMS}
I.~Kol\'{a}\v{r}, P.W.~Michor and J.~Slov\'ak,
{\em Natural operations in differential geometry},
Springer-Verlag, Berlin 1993.


\bibitem{Kow1}
O.~Kowalski,
{\em Curvature of the induced Riemannian metric of the tangent bundle of a Riemannian manifold},
J. Reine Angew. Math. 250 (1971), 124--129.



\bibitem{KowSek0}
O.~Kowalski and M.~Sekizawa, 
{\em Natural transformations of Riemannian metrics on manifolds to metrics on tangent bundles - a classification},
Bull. Tokyo Gakugei Univ. (4) 40 (1988), 1--29.



\bibitem{Nagano}
T.~Nagano,
{\em Isometries on complex-product spaces},
Tensor 9 (1959), 47--61.



\bibitem{OkuTacha}
M.~Okumura and S.~Tachibana,
{\em On the almost-complex structure of tangent bundles of Riemannian Spaces},
T\^ohoku Math. J. 14 (1962), 152--161.


\bibitem{Oproiu1}
V.~Oproiu,
{\em Some new geometric structures on the tangent bundle},
Publ. Math. Debrecen 55 (1999), 3-4, 261--281.


\bibitem{Sasa}
S.~Sasaki,
{\em On the differential geometry of tangent bundles of Riemannian manifolds}, 
T\^ohoku Math. J. 10 (1958), 338--354.



\bibitem{Tanno}
S.~Tanno,
\emph{Killing vectors and geodesic flow vectors on tangent bundles},
J. reine angew. Math., 282 (1976), 162--171.


\bibitem{Walczak}
P.G.~Walczak, 
{\em On Totally Geodesic Submanifolds of Tangent Bundle with Sasaki Metric},
Bull. Acad. Pol. Sci, ser. Sci. Math. 28, no.3-4 (1980), 161--165.




\end{thebibliography}
\end{document}